\theoremstyle{plain}
\newtheorem{theorem}{Theorem}[section]
\newtheorem{lemma}[theorem]{Lemma}
\newtheorem{coro}[theorem]{Corollary}
\newtheorem{hypo}[theorem]{Hypothesis}
\theoremstyle{definition}
\theoremstyle{remark}
\newtheorem{remark}[theorem]{Remark}
\numberwithin{equation}{section}
\newcommand{\abs}[1]{\lvert#1\rvert}
\newcommand{\labs}[1]{\left\lvert\,#1\,\right\rvert}
\newcommand{\Lr}[1]{\left(#1\right)}
\newcommand{\lr}[1]{\bigl(#1\bigr)}
\newcommand{\set}[2]{\{\,#1\,\mid\,#2\,\}}
\newcommand{\nm}[2]{\|\,#1\,\|_{#2}}
\newcommand{\jump}[1]{[\![#1]\!]}
\newcommand{\wnm}[1]{|\!|\!|#1|\!|\!|_{\iota,h}}
\newcommand{\red}{\color{red}}
\newcommand{\mc}[1]{\mathcal{#1}}
\newcommand{\mb}[1]{\mathbb{#1}}
\newcommand{\ov}[1]{\overline{#1}}
\def\al{\alpha}
\def\del{\delta}
\def\na{\nabla}
\def\eps{\epsilon}
\def\pa{\partial}
\def\Om{\Omega}
\def\lam{\lambda}
\def\x{\times}
\def\vpi{\varPi}
\def\md{\mathrm{d}}
\def\C{\mathbb C}
\def\D{\mathbb D}
\def\R{\mathbb R}
\def\dx{\,\mathrm{d}x}
\def\dy{\,\mathrm{d}y}
\def\dsx{\,\mathrm{d}\sigma(x)}
\def\osc{\text{Osc}}
\DeclareMathOperator{\divop}{\na\cdot}
\def\negint{{\int\negthickspace\negthickspace\negthickspace
\negthinspace -}}
\begin{document}
\title[Nonconforming elements for Strain Gradient Model]{H$^2-$Korn's Inequality and the Nonconforming Elements for The Strain Gradient Elastic Model}

\author{Hongliang Li}
\address{Department of Mathematics, Sichuan Normal University, Chengdu 610066, China\\
 email: lhl@sicnu.edu.cn}

\author{Pingbing Ming}
\address{LSEC, Institute of Computational Mathematics and Scientific/Engineering Computing, AMSS\\
 Chinese Academy of Sciences, No. 55, East Road Zhong-Guan-Cun, Beijing 100190, China\\
 and School of Mathematical Sciences, University of Chinese Academy of Sciences, Beijing 100049, China\\
 email: mpb@lsec.cc.ac.cn}
\author{Huiyu Wang}
\address{Beijing 101 Middle School\\
email: why@lsec.cc.ac.cn}
%
\begin{abstract}
We establish a new H$^2-$Korn's inequality and its discrete analog, which greatly simplify the construction of nonconforming elements for a linear strain gradient elastic model. The Specht triangle~\cite{Specht:1988} and the NZT tetrahedron~\cite{WangShiXu:2007} are analyzed as two typical representatives for robust nonconforming elements in the sense that the rate of convergence is independent of the small material parameter. We construct new regularized interpolation estimate
and the enriching operator for both elements, and prove the error estimates under minimal smoothness assumption on the solution. Numerical results are consistent with the theoretical prediction.
\end{abstract}

\keywords{Strain gradient elasticity, H$^2-$Korn's inequality, robust finite elements}
\date{\today}
\maketitle
\section{Introduction}
Let $u$ be the solution of the following boundary value problem:
\begin{equation}\label{eq:sgbvp}
\left\{
\begin{aligned}
(\iota^2\triangle-I)\Lr{\mu\triangle u+(\lambda+\mu)\na\divop u}&=f\quad&&\text{in\;}\Om,\\
u=\pa_n u&=0\quad&&\text{on\;}\pa\Om,
\end{aligned}\right.
\end{equation}
where $\lam$ and $\mu$ are the Lam\'e constants, and $\iota$ is the microscopic
parameter satisfying $0<\iota\le 1$. In particular, we are interested in the regime when $\iota$ is close
to zero. This boundary value problem arises from a  linear strain gradient elastic model proposed by Aifantis et al~\cite{Altan:1992, Ru:1993}, and $u$ is the displacement. This model may be regarded as a simplification of the more general strain gradient elasticity models~\cite{Mindlin:1964} because it contains only one extra material parameter $\iota$ besides the Lam\'e constants 
$\lambda$ and $\mu$. This simplified strain gradient model successfully eliminated the strain singularity of the brittle crack tip field~\cite{Exadaktylos:1996}, and we refer to~\cite{Eringen:2002} and~\cite{FleckHutchinson:1997} for other strain gradient models.

The boundary value problem~\eqref{eq:sgbvp} is essentially a singularly perturbed elliptic system of fourth order due to the appearance of the strain gradient $\na\eps(u)$. C$^1$-conforming finite elements such as Argyris triangle~\cite{ArgyrisFriedScharpf:1968} seems a natural choice for discretization. The performance of Argyris triangle and several other C$^1$-conforming finite elements has been carefully studied in~\cite{Fisher:2010} for a nonlinear strain gradient elastic model. A drawback of the C$^1$-conforming elements is that the number of the degrees of freedom is large and high order polynomial has to be used in the shape functions, which is more pronounced for three dimensional problems; See, e.g., the finite element for a three-dimensional strain gradient model proposed in~\cite{Zerovs:20092} locally has $192$ degrees of freedom. We aim to develop some simple and robust nonconforming elements to avoid such difficulties. The robustness is understood in the sense that the elements converge uniformly in the energy norm with respect to $\iota$.

To this end, we firstly prove a new H$^2-$Korn's inequality and its discrete analog in any dimension. This H$^2-$Korn's inequality may be viewed as a quantitative version of the so-called {\em vector version of J.L. Lions lemma}~\cite[Theorem 6.19-1]{Ciarlet:2013}; See also~\eqref{eq:lions}, while our proof is constructive and may be adapted to prove a Korn's inequality for piecewise H$^2$ vector field (broken H$^2-$Korn's inequality for short), which may be viewed as a higher-order counterpart of {\sc Brenner's} seminal Korn's inequality~\cite{Brenner:2004} for piecewise H$^1$ vector fields. Compared to the broken H$^2-$inequality proved in~\cite{LiMingShi:2017}, the jump term associated with the gradient tensor of the piecewise vector field may be dropped. Therefore, the degrees of freedom associated with the gradient tensor along each face or edge may be dropped, which simplify the construction of the elements. Based on this observation, all H$^1$ conforming but H$^2$ nonconforming elements are suitable for approximating this strain gradient model. We choose the Specht triangle~\cite{Specht:1988} and the NZT tetrahedron~\cite{WangShiXu:2007} as two typical representatives. The Specht triangle is simpler than those in~\cite{LiMingShi:2017}, because the elements therein locally belong to a $21$ dimensional subspace of quintic polynomials, while the tensor products of the Specht triangle locally belong to an $18$ dimensional subspace of quartic polynomials. It is worth mentioning that the broken H$^2-$Korn's inequality may also be exploited to develop C$^0$ interior penalty method~\cite{Engel:2002, Brenner:2011} for the strain gradient elastic model.

To prove the robustness of both elements, we construct a regularized interpolation operator and an enriching operator, and derive certain estimates for such operators, which are key to prove sharp error estimate for problems with less smooth solution. These two operators are also useful for strain gradient elasticity model with other type boundary conditions.

The remaining part of the paper is organized as follows. We prove the continuous and the broken H$^2-$Korn's inequalities in \S 2. The Specht triangle and the NTZ tetrahedron are introduced in \S 3 and the corresponding regularized interpolant are constructed and analyzed therein. We introduce enriching operators for both elements in \S 4, and derive the error bounds uniformly with respect to $\iota$ in the same part. The numerical tests of both elements are reported in the last Section, which confirm the theoretical prediction in \S 4.

Throughout this paper, the constant $C$ may differ from line to line, while it is independent of the mesh size $h$ and the materials parameter $\iota$.
\section{H$^2-$Korn's Inequalities}
In this part we prove the H$^2-$Korn's inequalities and the broken H$^2-$Korn's inequalities. Let us fix some notations firstly.
\subsection{Notations}
Let $\Om\subset\mb{R}^d$ be a bounded convex polytope. We shall use the standard notations for Sobolev spaces, norms and semi-norms~\cite{AdamsFournier:2003}. The function space $L^2(\Om)$ consists functions that are square integrable over $\Om$, which is equipped with norm $\nm{\cdot}{L^2(\Om)}$ and the inner product $(\cdot,\cdot)$. Let $H^m(\Om)$ be the Sobolev space of square integrable functions whose weak derivatives up to order $m$ are also square integrable, the corresponding norm
\(
\nm{v}{H^m(\Om)}^2{:}=\sum_{k=0}^m\abs{v}_{H^k(\Om)}^2
\)
with the semi-norm $\abs{v}_{H^k(\Om)}^2{:}=\sum_{\abs{\alpha}=k}\nm{\pa^{\al}v}{L^2(\Om)}^2$.

For a positive number $s$ that is not an integer, $H^s(\Om)$ is the fractional order Sobolev space. Let $m=\lfloor\,s\rfloor$ be the largest integer less than $s$ and $\varrho=s-m$. The sem-inorm $\abs{v}_{H^s(\Om)}$ and the norm $\nm{v}{H^s(\Om)}$ are given by
\begin{align*}
&\abs{v}_{H^s(\Om)}^2=\sum_{\abs{\al}=m}\int_{\Om}\int_{\Om}
\dfrac{\abs{(\pa^{\al})v(x)-(\pa^{\al})v(y)}^2}{\abs{x-y}^{2+2\varrho}}
\dx\,\dy,\\
&\nm{v}{H^s(\Om)}^2=\nm{v}{H^m(\Om)}^2+\abs{v}_{H^s(\Om)}^2.
\end{align*}

By~\cite[\S 7]{AdamsFournier:2003}, the above definition for the fractional order Sobolev space $H^s(\Om)$ is equivalent to the one obtained by interpolation, i.e.,
\[
H^s(\Om)=\left[H^{m+1}(\Om),H^m(\Om)\right]_{\theta}\qquad\text{with\quad}\theta=m+1-s.
\]
In particular, there exists $C$ that depends on $\Om$ and $s$ such that
\begin{equation}\label{eq:interineq}
\nm{v}{H^s(\Om)}\le C\nm{v}{H^{m+1}(\Om)}^{1-\theta}\nm{v}{H^m(\Om)}^{\theta}.
\end{equation}
For $s\ge 0, H_0^s(\Om)$ is the closure in $H^s(\Om)$ of the space of $C^\infty(\Om)$ functions with compact supports in $\Om$. In particular,
\begin{align*}
&H^1_0(\Om){:}=\set{v\in H^1(\Om)}{v=0\text{\;on\;}\pa\Om},\\
&H^2_0(\Om){:}=\set{v\in H^2(\Om)}{v=\pa_n v=0\text{\;on\;}\pa\Om},
\end{align*}
where $\pa_n v$ is normal derivative of $v$.

For any vector-valued function $v$, its gradient $\na v$ is a matrix-valued
function given by $(\na v)_{ij}=\pa_i v_j$ for $i,j=1,\cdots, d$. The strain tensor $\eps(v)$ is given by $\eps(v)=\frac{1}{2}(\na v+[\na v]^T)$ with $\eps_{ij}=\frac{1}{2}\lr{\pa_iv_j+\pa_jv_i}$. The divergence operator is defined by $\divop v=\sum_{i=1}^d\pa_iv_i$. The spaces $[H^m(\Om)]^d, [H_0^m(\Om)]^d$ and $[L^2(\Om)]^d$ are standard Sobolev spaces for the vector fields. Without abuse of notation, we employ $\abs{\cdot}$ to denote the abstract value of a scalar, the $\ell_2$ norm of a vector, and the Euclidean norm of a matrix. Throughout this paper, we may drop the subscript $\Om$ whenever no confusion occurs.

Let $\mc{T}_h$ be a simplicial triangulation of $\Omega$ with maximum mesh size $h$. We assume all elements in $\mc{T}_h$ are shape-regular in the sense of Ciarlet and Raviart~\cite{Ciarlet:1978}, i.e., there exists a constant $\gamma$ such that $h_K/\rho_K\le\gamma$, where $h_K$ is the diameter of element $K$, and $\rho_K$ is the diameter of the largest ball inscribed into $K$, and $\gamma$ is the so-called chunkiness parameter ~\cite{BrennerScott:2008}. We denote by $\mc{F}_h$, $\mc{E}_h$ and $\mc{V}_h$ the sets of $(d-1)-$dimensional faces, edges and vertices, respectively. Let $\mc{F}_h^{\,B}=\set{f\in\mc{F}_h}{f\subset\pa\Om}$ be the set of boundary faces. We denote by $\mc{F}_h^{\,I}=\mc{F}_h\setminus\mc{F}_h^{\,B}$ the set of interior faces. Similar notations apply to $\mc{E}_h$ and $\mc{V}_h$. We denote by $\mc{T}_h^{\,B}=\set{K\in\mc{T}_h}{\exists f\in\mc{F}_h^{\,B},\,\text{s.t.}\,f\subset\pa K}$ the set of boundary simplexes. Given a simplex or sub-simplex $B$, we let $\mc{T}_h(B)=\set{K\in\mc{T}_h}{B\subset K}$ be the element star containing $B$. Similar notations apply to the set of faces, edges and vertices. For example, for any node $a\in\mc{V}_h^{\,B}$, the $\mc{E}_h^{\,B}(a)$ is the set of boundary edges sharing a common node $a$.

We classify the boundary vertices as follows. We say that a node $a\in\mc{V}_h^B$ is a {\em flat node} if the boundary edges set $\mc{E}_h^{\,B}(a)$ span a $(d-1)$-dimensional linear space. Otherwise, we say that $a$ is a {\em sharp node}. We let $\mc{V}_h^B=\mc{V}_h^{\,\flat}\cup\mc{V}_h^{\;\#}$, where $\mc{V}_h^{\,\flat}$ and $\mc{V}_h^{\;\#}$ denote the sets of the flat node and sharp node, respectively. For any sharp node $a\in\mc{V}_h^{\,\#}$, we may choose a set with $d$ linear independent boundary edges, i.e., $\mc{E}_a=\set{e_i\in\mc{V}_h^{\,B}(a)}{1\le i\le d}$, such that $\mc{E}_a$ provide a basis of $\mb{R}^d$.
\subsection{H$^2$-Korn's inequality}
We write the boundary value problem~\eqref{eq:sgbvp} into the following variational problem: Find $u\in[H^2_0(\Om)]^d$ such that
\begin{equation}\label{variation:eq}
a(u,v)=(f,v)\quad\text{for all\quad} v\in [H_0^2(\Om)]^d,
\end{equation}
where the bilinear form $a$ is defined for any $v,w\in [H_0^2(\Om)]^d$ as
\[
a(v,w){:}=(\C\eps(v),\eps(w))+\iota^2(\D\na\eps(v),\na\eps(w)),
\]
and the fourth-order tensors $\C$ and the sixth-order tensor $\D$ are defined by
\[
\C_{ijkl}=\lam\del_{ij}\del_{kl}+2\mu\del_{ik}\del_{jl}\quad\text{and}\quad
\D_{ijklmn}=\lam\del_{il}\del_{jk}\del_{mn}+2\mu\del_{il}\del_{jm}\del_{kn},
\]
respectively. Here $\delta_{ij}$ is the Kronecker delta function. The strain gradient $\na\eps(v)$ is a third order tensor defined by $(\na\eps(v))_{ijk}=\eps_{jk,i}$.

The wellposedness of Problem~\eqref{variation:eq} depends on the coercivity of the bilinear form $a$ over $[H_0^2(\Om)]^d$, which is a direct consequence of the following H$^2-$Korn's inequality:
\begin{equation}\label{eq:h2korn}
\nm{\eps(v)}{L^2}^2+\nm{\na\eps(v)}{L^2}^2\ge C(\Om)\nm{\na v}{H^1}^2\qquad\text{for all\quad}v\in[H_0^2(\Om)]^d.
\end{equation}
This inequality was proved in~\cite[Theorem 1]{LiMingShi:2017} by exploiting the community property of the strain operator $\eps$ and the partial derivative operator $\pa$, which has been implicitly used in~\cite{AbelMoraMuller:2011} to prove an inequality similar to~\eqref{eq:h2korn}.

In what follows, we prove that~\eqref{eq:h2korn} remains valid for a more general vector field $v$ in $[H^2(\Om)\cap H_0^1(\Om)]^d$. The precise form is stated in~\eqref{eq:korn}. Our proof relies on the fact that the strain gradient field fully controls the Hessian of the displacement algebraically; See, cf.~\eqref{eq:algkorn}. This fact will be further exploited to prove a discrete analog of~\eqref{eq:korn} for a piecewise vector field, which is dubbed as the broken H$^2-$Korn's inequality. We have exploited a weaker version of such broken H$^2-$Korn's inequality to design two robust strain gradient finite elements in~\cite{LiMingShi:2017}. 
\begin{theorem}\label{lema:h2korn}
For any $v\in [H_0^1(\Om)]^d$ and $\na\eps(v)\in [L^2(\Om)]^{d\x d\x d}$, there holds $\na v\in [H^1(\Om)]^{d\x d}$ and
\begin{equation}\label{eq:korn}
\nm{\eps(v)}{L^2}^2+\nm{\na\eps(v)}{L^2}^2\ge \dfrac14\Lr{\nm{\na v}{L^2}^2+\nm{D^2 v}{L^2}^2}.
\end{equation}
Here, the constant $1/4$ in right hand side of the above inequality may be replaced by $1-1/\sqrt{2}$ when $d=2$.
\end{theorem}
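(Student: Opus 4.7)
My strategy is to recover the Hessian of $v$ pointwise from $\na\eps(v)$ via an algebraic identity, and then combine the resulting Hessian bound with the classical first-order Korn inequality on $[H^1_0(\Om)]^d$. The starting point is the identity
\begin{equation*}
\pa_i\pa_jv_k=\pa_i\eps_{jk}(v)+\pa_j\eps_{ik}(v)-\pa_k\eps_{ij}(v),
\end{equation*}
which follows for smooth $v$ from $\eps_{jk}(v)=\tfrac12(\pa_jv_k+\pa_kv_j)$ and equality of mixed partials, and extends distributionally to any $v\in[H^1_0(\Om)]^d$ by density. Since the right-hand side lies in $L^2$ by hypothesis, this already yields $D^2v\in[L^2(\Om)]^{d\x d\x d}$ and hence $\na v\in[H^1(\Om)]^{d\x d}$, which is the regularity statement in the theorem.

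To obtain the sharp factor $1/4$, set $H_{ijk}=\pa_i\pa_jv_k$ and $E_{ijk}=\pa_i\eps_{jk}(v)$. Squaring and summing the identity gives the pointwise relation
\begin{equation*}
\sum_{ijk}H_{ijk}^2=3|E|^2-2s,\qquad s:=\sum_{ijk}E_{ijk}E_{kij},
\end{equation*}
after a cyclic relabeling of dummy indices collapses all three naively distinct cross sums into the single quantity $s$. To bound $s$ from below I would use the cyclic-shift operator $P\colon E_{ijk}\mapsto E_{kij}$, which is an isometry on $\R^{d^3}$ with $P^3=I$; hence $I+P+P^2$ is positive semidefinite (three times the orthogonal projection onto its fixed subspace). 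A direct dummy-variable relabeling gives $\langle E,PE\rangle=\langle E,P^2E\rangle=s$, so the PSD property reads $|E|^2+2s\ge0$, i.e.\ $s\ge-\tfrac12|E|^2$ pointwise. Integrating yields $\nm{D^2v}{L^2}^2\le 4\nm{\na\eps(v)}{L^2}^2$.

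The lower-order term is handled by the familiar Korn identity for $H^1_0$ fields: two integrations by parts (boundary terms vanish since $v=0$ on $\pa\Om$) give $\nm{\eps(v)}{L^2}^2=\tfrac12\bigl(\nm{\na v}{L^2}^2+\nm{\divop v}{L^2}^2\bigr)$, whence $\nm{\na v}{L^2}^2\le 2\nm{\eps(v)}{L^2}^2$. Adding $\tfrac14\nm{\na v}{L^2}^2\le\nm{\eps(v)}{L^2}^2$ to $\tfrac14\nm{D^2v}{L^2}^2\le\nm{\na\eps(v)}{L^2}^2$ then produces~\eqref{eq:korn}. The main obstacle is the second step: a direct Cauchy--Schwarz on $s$ only produces the factor $5$, and recovering the sharp $4$ genuinely requires the $P^3=I$ observation above. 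For $d=2$, I would sharpen the constant to $1-1/\sqrt2$ by a joint (rather than separate) optimization, combining $\nm{\eps(v)}{L^2}^2=\tfrac12(\nm{\na v}{L^2}^2+\nm{\divop v}{L^2}^2)$ with the analogous identity $\nm{\na\eps(v)}{L^2}^2=\tfrac12\nm{D^2v}{L^2}^2+\tfrac12R$, where $R=\int_{\Om}\sum_{ijk}(\pa_i\pa_jv_k)(\pa_i\pa_kv_j)\,\dx$, further reducing $R$ via an integration by parts that is feasible in two dimensions, and then solving the resulting $2\x 2$ generalized-eigenvalue problem.
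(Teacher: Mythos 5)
Your proof of the main $1/4$ estimate is correct and uses the same core architecture as the paper (a \emph{pointwise} algebraic bound $|\na\eps(v)|^2\ge\tfrac14|\na^2 v|^2$ integrated over $\Om$, then glued to the classical first Korn inequality), but the way you obtain the pointwise bound is genuinely different. The paper splits $|\na\eps(v)|^2$ into diagonal terms $I_1$, the mixed terms $I_2$ (handled by $a^2+\tfrac12(a+b)^2\ge(1-1/\sqrt2)(a^2+b^2)$), and the totally-mixed terms $I_3$ (handled by the cyclic identity $\sum_i(a_i+a_{i+1})^2=\sum_i a_i^2+(\sum_i a_i)^2$). You instead start from the reconstruction formula $\pa_i\pa_jv_k=E_{ijk}+E_{jik}-E_{kij}$, derive the exact identity $|H|^2=3|E|^2-2s$ by using the symmetry $E_{ijk}=E_{ikj}$ to collapse the three cross terms, and then bound $s\ge-\tfrac12|E|^2$ via positive semidefiniteness of $I+P+P^2$ for the cyclic shift $P$. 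This is tighter conceptually — it isolates where the factor $4$ comes from (the spectrum of the cyclic permutation) — whereas the paper's split requires checking three kinds of terms but makes the $d=2$ improvement transparent (the $I_3$ block is empty). Both arguments also immediately give the $L^2$-integrability of $D^2v$ and hence the regularity claim, and both lose a factor $2$ (not $4$) in the Korn step, so the argument is not wasteful on the first-order piece.

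Your sketch of the $d=2$ sharpening to $1-1/\sqrt2$ has a genuine gap. The pointwise bound $s\ge-\tfrac12|E|^2$ you established is not sharp enough in $d=2$: to reach $|H|^2\le(2+\sqrt2)|E|^2$ (equivalently $|\na\eps(v)|^2\ge(1-1/\sqrt2)|\na^2v|^2$) you would need $s\ge-\tfrac{\sqrt2-1}{2}|E|^2$, and the $I+P+P^2\ge0$ argument alone cannot deliver that; one must exploit that for $d=2$ the tensor $E$ lies in the proper subspace of tensors symmetric in the last two slots (no ``totally mixed'' part), which is exactly what the paper's observation $I_3=0$ encodes. Moreover the integration by parts you propose on $R=\int\sum_{ijk}(\pa_i\pa_jv_k)(\pa_i\pa_kv_j)\dx$ produces boundary terms involving $\na v$ on $\pa\Om$; since the theorem only assumes $v\in[H^2(\Om)\cap H_0^1(\Om)]^d$, not $v\in[H^2_0(\Om)]^d$, those boundary contributions do not vanish and cannot be dismissed by density of $C_c^\infty(\Om)$ (which is dense in $H^1_0$ but not in $H^2\cap H^1_0$). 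The cleanest fix is to stay pointwise: in $d=2$ restrict $P$ to the invariant subspace $\{E:E_{ijk}=E_{ikj}\}$ and compute its least eigenvalue there, or simply adopt the paper's decomposition and note $I_3$ is vacuous.
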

\begin{proof}
The core of the proof is the following algebraic inequality:
\begin{equation}\label{eq:algkorn}
\abs{\na\eps(v)}^2\ge \frac{1}{4}\abs{\na^2 v}^2.
\end{equation}
Integrating~\eqref{eq:algkorn} over domain $\Om$, we obtain
\begin{equation}\label{eq:2ndder}
\nm{\na\eps(v)}{L^2}^2\ge\frac{1}{4}\nm{\na^2 v}{L^2}^2,
\end{equation}
which together with the first Korn's inequality~\cite{Korn:1908, Korn:1909}:
\begin{equation}\label{eq:1stkorn}
2\nm{\eps(v)}{L^2}^2\ge\nm{\na v}{L^2}^2\qquad\text{for all\quad}v\in [H_0^1(\Om)]^d
\end{equation}
implies~\eqref{eq:korn}.

To prove~\eqref{eq:algkorn}, we start with the identity
\begin{equation}\label{eq:defsg}
\begin{aligned}
\abs{\na\eps(v)}^2&=\sum_{1\leq i,j,k\leq d}\abs{\pa_i\eps_{jk}}^2\\
&=\sum_{1=1}^d\abs{\pa_i\eps_{ii}}^2+\sum_{1\le i<j\le d}\lr{\abs{\pa_i\eps_{jj}}^2+2\abs{\pa_j\eps_{ij}}^2}+\lr{\abs{\pa_j\eps_{ii}}^2+2\abs{\pa_i\eps_{ij}}^2}\\
&\quad+\sum_{i\neq j\neq k}\abs{\pa_i\eps_{jk}}^2{:}=I_1+I_2+I_3,
\end{aligned}
\end{equation}
where $I_3$ vanishes for $d=2$.

Employing the elementary algebraic inequality
\[
a^2+\dfrac12(a+b)^2\ge(1-1/\sqrt2)(a^2+b^2), \qquad a,b\in\R,
\]
we obtain
\begin{equation}\label{ieq:1}
\left\{
\begin{aligned}
\abs{\pa_{j}\eps_{ii}}^2+2\abs{\pa_i\eps_{ij}}^2&\ge(1-1/\sqrt2)\Lr{\abs{\pa_{ij}v_i}^2+\abs{\pa_{ii}v_j}^2},\\
\abs{\pa_{i}\eps_{jj}}^2+2\abs{\pa_j\eps_{ij}}^2&\ge(1-1/\sqrt2)\Lr{\abs{\pa_{ij}v_j}^2+\abs{\pa_{jj}v_i}^2}.
\end{aligned}\right.
\end{equation}
Using the elementary algebraic equality
\[
\sum_{i=1}^d(a_i+a_{i+1})^2=\sum_{i=1}^da_i^2+\Lr{\sum_{i=1}^da_i}^2\qquad\text{for\quad}a_i\in\R\text{\quad and\quad}a_{d+1}=a_1,
\]
we obtain
\begin{equation}\label{ieq:2}
\sum_{1\leq i\neq j\neq k\leq d}\abs{\pa_i\eps_{jk}}^2=\dfrac14\sum_{1\leq i\neq j\neq k\leq d}\abs{\pa_{ij}v_k}^2+\dfrac14\Lr{\sum_{1\leq i\neq j\neq k\le d}\pa_{ij}v_k}^2.
\end{equation}
Combining ~\eqref{eq:defsg}, ~\eqref{ieq:1} and ~\eqref{ieq:2}, we obtain~\eqref{eq:algkorn} immediately.

$I_3$ vanishes when $d=2$. Therefore, the constant in the right-hand side of~\eqref{eq:korn} may be replaced by $1-1/\sqrt{2}$. This completes the proof.
\end{proof}

%
A direct consequence of Theorem~\ref{lema:h2korn} is the following full H$^2-$Korn's inequality.
\begin{coro}
Let $\Om\subset\R^d$ be a domain such that the following Korn's inequality is valid for
any vector field $v\in [L^2(\Om)]^d$ and $\eps(v)\in[L^2(\Om)]^{d\x d}$,
\[
\nm{v}{L^2}+\nm{\eps(v)}{L^2}\ge C(\Om)\nm{v}{H^1}.
\]
If $v\in [L^2(\Om)]^d,\eps(v)\in[L^2(\Om)]^{d\x d}$ and $\na\eps(v)\in [L^2(\Om)]^{d\x d\x d}$,
then $v\in [H^2(\Om)]^d$ and
\begin{equation}\label{eq:fullkorn}
\nm{v}{L^2}+\nm{\eps(v)}{L^2}+\nm{\na\eps(v)}{L^2}\ge\min\Lr{C(\Om),1/2}\nm{v}{H^2}.
\end{equation}
\end{coro}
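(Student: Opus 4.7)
The plan is to combine the hypothesized first-order Korn inequality (which provides the $L^2/H^1$ control) with the pointwise algebraic step from the proof of Theorem~\ref{lema:h2korn} (which provides the second-order control), using the fact that the latter step, namely~\eqref{eq:algkorn}, is purely algebraic and therefore requires no boundary condition. I would not invoke Theorem~\ref{lema:h2korn} itself, since it is stated for $v\in[H_0^1(\Om)]^d$, whereas here $v$ is only assumed to lie in $[L^2(\Om)]^d$.

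First, I would apply the Korn inequality assumed in the hypothesis to conclude $v\in[H^1(\Om)]^d$ with
\[
C(\Om)\,\nm{v}{H^1}\le\nm{v}{L^2}+\nm{\eps(v)}{L^2}.
\]
Second, I would reinterpret the pointwise inequality~\eqref{eq:algkorn} distributionally. Since it only asserts $\abs{\na\eps(v)}^2\ge\tfrac14\abs{\na^2v}^2$ componentwise (and each second partial $\pa_i\pa_jv_k$ can be written as $\pa_i\eps_{jk}(v)+\pa_j\eps_{ik}(v)-\pa_k\eps_{ij}(v)$, which is a tempered distribution given $\eps(v)\in L^2$), the hypothesis $\na\eps(v)\in[L^2(\Om)]^{d\x d\x d}$ forces $\na^2v\in L^2$, hence $v\in[H^2(\Om)]^d$, together with
\[
\nm{\na^2v}{L^2}\le 2\nm{\na\eps(v)}{L^2}.
\]
Third, I would scale both estimates by $M{:}=\min(C(\Om),1/2)$: since $M/C(\Om)\le 1$ and $2M\le 1$, we get $M\nm{v}{H^1}\le\nm{v}{L^2}+\nm{\eps(v)}{L^2}$ and $M\nm{\na^2v}{L^2}\le\nm{\na\eps(v)}{L^2}$. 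Adding the two and using $\nm{v}{H^2}\le\nm{v}{H^1}+\nm{\na^2v}{L^2}$, which follows from $\sqrt{a^2+b^2}\le a+b$ for $a,b\ge 0$, yields~\eqref{eq:fullkorn}.

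There is no substantial obstacle here; the only item requiring care is the regularity bootstrap in the second step, because $v$ is a priori only in $L^2$. This is why I would emphasize that~\eqref{eq:algkorn} is a pointwise/distributional inequality, extracted from~\eqref{eq:defsg}--\eqref{ieq:2} in the proof of Theorem~\ref{lema:h2korn}, and not an $L^2$ integrated statement that would already presume $v\in H^2$.
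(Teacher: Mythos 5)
Your proof is correct and spells out what the paper leaves implicit when it calls the corollary a ``direct consequence'' of Theorem~\ref{lema:h2korn}: the core is the boundary-condition-free algebraic inequality~\eqref{eq:algkorn} (equivalently~\eqref{eq:2ndder}) combined with the hypothesized first-order Korn inequality, and your choice of $M=\min(C(\Om),1/2)$ reproduces the stated constant. Your added care on the regularity bootstrap --- passing from $\na\eps(v)\in L^2$ to $\na^2 v\in L^2$ via the distributional identity $\pa_i\pa_j v_k=\pa_i\eps_{jk}(v)+\pa_j\eps_{ik}(v)-\pa_k\eps_{ij}(v)$ before integrating the pointwise inequality --- is a genuine point the paper glosses over, since Theorem~\ref{lema:h2korn} as stated assumes $v\in[H_0^1(\Om)]^d$ and so cannot be invoked verbatim here.
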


In~\cite[Theorem 6.19-1]{Ciarlet:2013}, the following vector version of {\em J.L. Lions Lemma} is proved: For any domain $D$ in $\mb{R}^d$ and $m\in\mb{Z}$, then
\begin{equation}\label{eq:lions}
v\in [H^m(D)]^d\quad\text{and}\quad\eps(v)\in[H^m(D)]^{d\x d}\text{\quad implies\quad}v\in [H^{m+1}(D)]^d.
\end{equation}
The inequality~\eqref{eq:fullkorn} may be viewed as a quantitative version of~\eqref{eq:lions}
with $m=1$, while the proof in~\cite[Theorem 6.19-1]{Ciarlet:2013} is nonconstructive and is not easy to be adapted to prove the broken H$^2-$Korn's inequality for a piecewise vector filed.

The regularity of Problem~\eqref{variation:eq} is essential to prove a uniform error estimate. Unfortunately, it does not seem easy to identify such estimates in the literature, and we give a proof for the readers' convenience. We firstly make an extra regularity assumption.
\begin{hypo}\label{hypo}
Let $u$ be a solution of following equations,
\[
\left\{
\begin{aligned}
&\triangle\lr{\mc{L}u}=f\quad&&\text{in\;}\Om,\\
&u=\pa_nu=0\quad&&\text{on\;}\pa\Om,
\end{aligned}\right.
\]
where $\mc{L}u{:}=\mu\triangle u+(\lambda+\mu)\na\divop u$. Then there holds that for any $f\in H^{-1}(\Om)$,
\begin{equation}\label{hypo:regularity}
\nm{u}{H^3}\le C\nm{f}{H^{-1}}.
\end{equation}
\end{hypo}

If $\Om$ is a smooth domain, the regularity property~\eqref{hypo:regularity} is standard; See e.g.,~\cite{Agmon:1964}. While it is unclear whether the above regularity estimate is true for a convex polytope. Nevertheless, if the operator $\mc{L}$ is replaced by the Laplacian operator,  then~\eqref{hypo:regularity} is proved in~\cite[Chapter 4 Theorem 4.3.10]{Mazya:2010}.
\begin{lemma}\label{lemma:reg}
Assume Hypothesis~\ref{hypo} is valid and let $u$ be the solution of~\eqref{variation:eq}, then there exists $C$ that may depend on $\Om$ but independent of $\iota$ such that
\begin{equation}\label{eq:regularity}
\nm{\na^k(u-u_0)}{L^2}\le C\iota^{3/2-k}\nm{f}{L^2}\quad\text{for\quad}k=1,2,
\end{equation}
where $u_0\in [H_0^1(\Om)]^d$ satisfies
\begin{equation}\label{eq:limit}
(\C\eps(u_0),\eps(v))=(f,v)\qquad \text{for all\quad}v\in[H_0^1(\Om)]^d.
\end{equation}

Moreover, we have
\begin{equation}\label{eq:estfrac1}
\nm{u}{H^{3/2}}\le C\nm{f}{L^2},
\end{equation}
and
\begin{equation}\label{eq:regfrac2}
\nm{u}{H^{5/2}}\le C\iota^{-1}\nm{f}{L^2}.
\end{equation}
\end{lemma}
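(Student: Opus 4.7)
The strategy is a boundary-layer correction argument. The obstacle is that $u-u_0$ belongs to $[H_0^1(\Om)]^d\cap [H^2(\Om)]^d$ but not to $[H_0^2(\Om)]^d$ because $\pa_n u_0\neq 0$ on $\pa\Om$ in general, so it is not admissible as a test function in the strain gradient variational form~\eqref{variation:eq}. To remedy this, I would first build a corrector $z\in[H^2(\Om)]^d$ supported in an $O(\iota)$-thick tube around $\pa\Om$ with $z|_{\pa\Om}=0$ and $\pa_n z|_{\pa\Om}=-\pa_n u_0$, via a standard ansatz of the form $z=\chi(\mathrm{dist}(\cdot,\pa\Om)/\iota)\,\mathrm{dist}(\cdot,\pa\Om)\,\tilde g$, where $\chi$ is a smooth cutoff and $\tilde g$ is an $H^1(\Om)$-lifting of $-\pa_n u_0\in H^{1/2}(\pa\Om)$. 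Exploiting the $O(\iota)$ width of the support gives the sharp scaling $\|\na^k z\|_{L^2}\le C\iota^{3/2-k}\|u_0\|_{H^2}\le C\iota^{3/2-k}\|f\|_{L^2}$ for $k=0,1,2$, where the bound on $u_0$ is the standard Lam\'e elliptic regularity on convex polytopes.

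Setting $\tilde u_0:=u_0+z$, one has $\tilde u_0=\pa_n\tilde u_0=0$ on $\pa\Om$, so $w:=u-\tilde u_0\in[H_0^2(\Om)]^d$. Testing the variational form with $v=w$ and cancelling the load term against $(\C\eps(u_0),\eps(w))$ using~\eqref{eq:limit} produces
\[
a(w,w)=-(\C\eps(z),\eps(w))-\iota^2(\D\na\eps(\tilde u_0),\na\eps(w)).
\]
Cauchy-Schwarz, Young's inequality and the coercivity of $\C$ and $\D$ then yield $\|\eps(w)\|_{L^2}^2+\iota^2\|\na\eps(w)\|_{L^2}^2\le C\iota\|f\|_{L^2}^2$. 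Applying the first Korn inequality~\eqref{eq:1stkorn} to control $\|\na w\|$ and Theorem~\ref{lema:h2korn} to control $\|D^2 w\|$, and writing $u-u_0=w+z$, delivers~\eqref{eq:regularity}. Estimate~\eqref{eq:estfrac1} then follows by splitting $u=u_0+(u-u_0)$, bounding $\|u_0\|_{H^{3/2}}\le\|u_0\|_{H^2}\le C\|f\|$, and applying the interpolation inequality~\eqref{eq:interineq} with $s=3/2$, $m=1$ to $u-u_0$, whereby the factors $\iota^{1/2}$ and $\iota^{-1/2}$ from~\eqref{eq:regularity} cancel exactly.

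For~\eqref{eq:regfrac2} I would use Hypothesis~\ref{hypo}. The key observation is that integration by parts in~\eqref{eq:limit} gives $\mc{L}u_0=-f$, so the strong equation $(\iota^2\triangle-I)\mc{L}u=f$ rewrites as $\iota^2\triangle\mc{L}u=\mc{L}u+f=\mc{L}(u-u_0)$. Hence
\[
\|\triangle\mc{L}u\|_{H^{-1}}=\iota^{-2}\|\mc{L}(u-u_0)\|_{H^{-1}}\le C\iota^{-2}\|u-u_0\|_{H^1}\le C\iota^{-3/2}\|f\|_{L^2},
\]
and Hypothesis~\ref{hypo} gives $\|u\|_{H^3}\le C\iota^{-3/2}\|f\|_{L^2}$. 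Combining this with $\|u\|_{H^2}\le C\iota^{-1/2}\|f\|$ (obtained from~\eqref{eq:regularity} with $k=2$ plus $\|u_0\|_{H^2}\le C\|f\|$) and applying~\eqref{eq:interineq} with $s=5/2$, $m=2$ produces the sharp $\iota^{-1}$ power in~\eqref{eq:regfrac2}.

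The main obstacle is the construction and analysis of the corrector $z$ in the first step: the sharp scaling $\iota^{3/2-k}$ requires a careful local analysis in tubular coordinates near $\pa\Om$, in which the $O(\iota)$ width of the support, the $O(\iota^{-1})$ growth of each cutoff derivative and the $H^{1/2}(\pa\Om)$ trace regularity of $\pa_n u_0$ must be balanced precisely; any sub-optimal scaling in $z$ propagates through the energy estimate and would prevent the cancellation needed to obtain~\eqref{eq:estfrac1} and~\eqref{eq:regfrac2}.
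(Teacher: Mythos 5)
Your strategy is genuinely different from the paper's: you build a boundary-layer corrector $z$ so that $w=u-u_0-z\in[H_0^2(\Om)]^d$ and then test the variational form with $w$, whereas the paper tests directly with $\phi=u-u_0\notin[H_0^2(\Om)]^d$, accepts the resulting boundary term $\iota^2\int_{\pa\Om}M_{nn}(u)\,\pa_nu_0\,\md\sigma$, and controls it by the multiplicative trace inequality~\eqref{eq:tracedomain} together with a bootstrap: $\|M_{nn}(u)\|_{L^2(\pa\Om)}$ is bounded via $\|\na^2u\|_{L^2}^{1/2}\|\na^2u\|_{H^1}^{1/2}$, then $\|u\|_{H^3}$ from Hypothesis~\ref{hypo} is reinserted and absorbed into the left-hand side by a Young-inequality split. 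Your reductions from the energy estimate to~\eqref{eq:regularity},~\eqref{eq:estfrac1} and~\eqref{eq:regfrac2} — the cancellation $a(w,w)=-(\C\eps(z),\eps(w))-\iota^2(\D\na\eps(\tilde u_0),\na\eps(w))$, the Korn inequalities~\eqref{eq:1stkorn} and Theorem~\ref{lema:h2korn}, the observation $\triangle\mc L u=\iota^{-2}\mc L(u-u_0)$ feeding Hypothesis~\ref{hypo}, and the interpolation at $s=3/2$ and $s=5/2$ — all match the paper or are equivalent to it.

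The gap is in the corrector, and it is not merely the bookkeeping of powers of $\iota$ that you flag. With $z=\chi(\mathrm{dist}/\iota)\,\mathrm{dist}\cdot\tilde g$ and $\tilde g$ an \emph{arbitrary} $H^1(\Om)$-lifting of $-\pa_nu_0\in H^{1/2}(\pa\Om)$, the second derivative $\na^2z$ contains the term $\chi\cdot\mathrm{dist}\cdot\na^2\tilde g$. For a generic $H^1$ function this is not even in $L^2_{\mathrm{loc}}$, so the claimed bound $\|\na^2z\|_{L^2}\le C\iota^{-1/2}\|u_0\|_{H^2}$ fails outright — there is no balance of $\iota$-powers to be had. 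To salvage the argument you must choose a \emph{specific} lifting (harmonic, Poisson-type, or a $\iota$-scale mollified normal extension) for which a weighted Hardy/trace estimate of the form $\|\mathrm{dist}\cdot\na^2\tilde g\|_{L^2}\lesssim\|\pa_nu_0\|_{H^{1/2}(\pa\Om)}$ holds; this is a nontrivial lemma in its own right. In addition, for a convex polytope the boundary is only Lipschitz, so the tubular-neighborhood construction must be localized and patched across edges and vertices where the normal field is discontinuous. The paper's trace-inequality route sidesteps both of these difficulties entirely — it needs no corrector, no lifting, and no boundary-geometry regularity beyond Lipschitz — which is precisely why it is the cleaner proof here. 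Your route is viable and standard in the boundary-layer literature, but the corrector step has to be filled in with a concrete lifting and a weighted estimate before the scaling $\iota^{3/2-k}$ for $k=2$ can be asserted.
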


Under Hypothesis~\ref{hypo}, we may prove this regularity result by following essentially the same line of the proof in~\cite[Lemma 5.1]{Tai:2001}. We include it here for completeness.
\begin{proof}
By~\eqref{eq:sgbvp} and~\eqref{eq:limit}, we have
\[
\triangle\mc{L}(u)=\iota^{-2}\mc{L}(u-u_0).
\]
Using the regularity hypothesis ~\eqref{hypo:regularity}, we obtain
\begin{equation}\label{regularity:H3}
\nm{u}{H^3}\le C\iota^{-2}\nm{\mc{L}(u-u_0)}{-1}\le C\iota^{-2}\lr{\mb{C}\eps(u-u_0),\eps(u-u_0)}^{\frac{1}{2}}.
\end{equation}
By the standard regularity estimate, we have
\begin{equation}\label{estimate:u0}
\nm{u_0}{H^2}\le C\nm{f}{L^2}.
\end{equation}

Denoting $\phi=u-u_0$ and integration by parts, we have
\begin{align*}
\iota^2\lr{\mb{D}\na\eps(\phi),\na\eps(\phi)}+\lr{\mb{C}\eps(\phi),\eps(\phi)}&=-\iota^2\lr{\mb{D}\na\eps(u_0),\na\eps(\phi)}\\
&\quad+\iota^2\int_{\pa\Om}M_{nn}(u)\pa_n u_0\,\md\sigma(x),
\end{align*}
where $M_{nn}(u)=n^{\mathrm{T}}\cdot\mb{D}\na\eps(u)\cdot n$.

Using the regularity estimate~\eqref{estimate:u0}, we obtain
\[
\iota^2\lr{\mb{D}\na\eps(u_0),\na\eps(\phi)}\le\frac{\iota^2}{2}\lr{\mb{D}\na\eps(\phi),\na\eps(\phi)}+\dfrac{\iota^2}{2}\nm{f}{L^2}^2.
\]

Using the trace inequality~\eqref{eq:tracedomain}, we obtain, for $\del>0$ to be chosen later,
\begin{align*}
\iota^2\left\lvert\int_{\pa\Om}M_{nn}(u)\pa_nu_0\md\sigma(x)\right\rvert&\le\iota^3\delta\nm{M_{nn}(u)}{L^2(\pa\Om)}^2+\dfrac{\iota}{4\delta}\nm{\pa_nu_0}{L^2(\pa\Om)}^2\\
&\le C\delta\Lr{\iota^4\nm{\na^2u}{H^1}^2+\iota^2\nm{\na^2 u}{L^2}^2}+\dfrac{C}{\delta}\iota\nm{u_0}{H^2}^2.
\end{align*}
Using~\eqref{eq:2ndder}, we obtain
\[
\nm{\na^2 u}{L^2}^2\le 2\nm{\na^2\phi}{L^2}^2+2\nm{\na^2 u_0}{L^2}^2\le\dfrac{4}{\mu}\lr{\mb{D}\na\eps(\phi),\na\eps(\phi)}
+2\nm{\na^2 u_0}{L^2}^2.
\]
Using the regularity estimates~\eqref{regularity:H3} and~\eqref{estimate:u0}, we bound the right-hand side of the above inequality as
\begin{align*}
\iota^2\labs{\int_{\pa\Om}M_{nn}(u)\pa_nu_0\md\sigma(x)}
&\le C\delta\bigl[\lr{\mb{C}\eps(\phi),\eps(\phi)}+\iota^2\lr{\mb{D}\na\eps(\phi),\na\eps(\phi)}\bigr]\\
&\quad+C\iota\Lr{\delta\iota+\dfrac1{\delta}}\nm{f}{L^2}^2.
\end{align*}

Combining the above inequalities,  we obtain
\begin{align*}
\iota^2\lr{\mb{D}\na\eps(\phi),\na\eps(\phi)}+\lr{\mb{C}\eps(\phi),\eps(\phi)}&\le C\delta\bigl[\lr{\mb{C}\eps(\phi),\eps(\phi)}+\iota^2\lr{\mb{D}\na\eps(\phi),\na\eps(\phi)}\bigr]\\
&\quad+\dfrac{\iota^2}{2}
\lr{\mb{D}\na\eps(\phi),\na\eps(\phi)}
+C\iota^2\delta\nm{f}{L^2}^2+\dfrac{C\iota}{\delta}\nm{f}{L^2}^2.
\end{align*}
Choosing $\del$ properly, we obtain~\eqref{eq:regularity}.

Using~\eqref{eq:regularity} and Poincar\'{e} inequality, and noting that $\iota<1$, we obtain
\begin{equation}\label{eq:limiterr2}
\nm{u-u_0}{H^2}\le C\Lr{\iota^{1/2}+\iota^{-1/2}}\nm{f}{L^2}\le C\iota^{-1/2}\nm{f}{L^2},
\end{equation}
and
\begin{equation}\label{eq:limiterr3}
\nm{u}{H^2}\le\nm{u-u_0}{H^2}+\nm{u_0}{H^2}\le C\iota^{-1/2}\nm{f}{L^2}.
\end{equation}
Interpolating~\eqref{eq:limiterr2} and~\eqref{eq:regularity} with $k=1$, we obtain
\[
\nm{u-u_0}{H^{3/2}}\le C\nm{f}{L^2}.
\]
Using the interpolation inequality~\eqref{eq:interineq}, we obtain
\[
\nm{u_0}{H^{3/2}}\le C\nm{u_0}{H^1}^{1/2}\nm{u_0}{H^2}^{1/2}\le C\nm{f}{L^2}.
\]
A combination of the above two inequalities yields~\eqref{eq:estfrac1}.

Combining~\eqref{regularity:H3} and~\eqref{eq:regularity}, we obtain
\[
\nm{u}{H^3}\le C\iota^{-2}\nm{\na(u-u_0)}{L^2}\le C\iota^{-3/2}\nm{f}{L^2}.
\]
Interpolating the above inequality and~\eqref{eq:limiterr3}, we obtain~\eqref{eq:regfrac2}. 
\end{proof}
\subsection{The broken H$^2$-Korn's inequality}
For any $m\in\mb{N}$, the space of piecewise vector fields is defined by
\[
[H^m(\Om,\mc{T}_h)]^d{:}=\set{v\in [L^2(\Om)]^d}{v|_K\in[H^m(K)]^d\quad\text{for all\quad} K\in\mc{T}_h},
\]
which is equipped with the broken norm
\[
\nm{v}{H_h^k}{:}=\nm{v}{L^2}+\sum_{k=1}^m\nm{\na^k_h v}{L^2},
\]
where
\(
\nm{\na^k_h v}{L^2}^2=\sum_{K\in\mc{T}_h}\nm{\na^k v}{L^2(K)}^2
\)
with $(\na^k_h v)|_K=\na^k(v|_K)$. Moreover, $\eps_h(v)=(\na_h v+[\na_h v]^T)/2$. For any $v\in H^m(\Om,\mc{T}_h)$, we denote by $\jump{v}$ the jump of $v$ across the faces or the edge.

The main result of this part is the following broken H$^2$-Korn's inequality.
\begin{theorem}\label{thm:hkorn}
For any $v\in[H^2(\Omega,\mc{T}_h)]^d$, there exits $C$ that depends on $\Om$ and $\gamma$ but independent of $h$ such that
\begin{equation}\label{eq:diskorn}
\begin{aligned}
\nm{v}{H_h^2}^2\le C\biggl(\nm{\na_h\eps_h(v)}{L^2}^2&+\nm{\eps_h(v)}{L^2}^2+\nm{v}{L^2}^2\\
&\quad+\sum_{f\in\mc{F}_h}h_{e}^{-1}\nm{\jump{\Pi_{f}v}}{L^2(f)}^2\biggr),
\end{aligned}
\end{equation}
where $\Pi_f:[L^2(f)]^d\mapsto [P_{1,-}(f)]^d$ is the $L^2$ projection and
\[
[P_{1,-}(f)]^d{:}=\set{v\in [P_1(f)]^d}{v\cdot t\in\text{RM}(F)},
\]
where $t$ is the tangential vector of the face $f$ (or edge for $d=2$), and $\text{RM}(f)$ is the infinitesimal rigid motion on $f$.
\end{theorem}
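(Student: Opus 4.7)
The plan is to split the proof into two complementary parts: the top-order term $\nm{\na_h^2 v}{L^2}$ is bounded by $\nm{\na_h\eps_h(v)}{L^2}$ alone, via a pointwise algebraic identity; the first-order term $\nm{\na_h v}{L^2}$ is bounded by $\nm{\eps_h(v)}{L^2}$, $\nm{v}{L^2}$ and the face-jump penalty through a Brenner-type broken first Korn inequality. Combining the two bounds with the trivial estimate on $\nm{v}{L^2}$ then yields~\eqref{eq:diskorn}.

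For the top-order term, observe that the algebraic inequality~\eqref{eq:algkorn} used in the proof of Theorem~\ref{lema:h2korn} is a pointwise identity between the Cartesian components of $\na^2 v$ and those of $\na\eps(v)$; as such it is valid wherever the second weak derivatives of $v$ exist. Since $v|_K\in[H^2(K)]^d$ for every $K\in\mc{T}_h$, applying~\eqref{eq:algkorn} elementwise and summing over $\mc{T}_h$ gives
\[
\nm{\na_h\eps_h(v)}{L^2}^2\ge\frac{1}{4}\nm{\na_h^2 v}{L^2}^2,
\]
and no interelement boundary contribution is created, so the Hessian is recovered without any jump penalty.

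The first-order bound is the substantive step. I plan to prove a broken first Korn inequality
\[
\nm{\na_h v}{L^2}^2\le C\Lr{\nm{\eps_h(v)}{L^2}^2+\nm{v}{L^2}^2+\sum_{f\in\mc{F}_h}h_f^{-1}\nm{\jump{\Pi_f v}}{L^2(f)}^2},
\]
in the spirit of~\cite{Brenner:2004}. The scheme is to select on each $K\in\mc{T}_h$ a best-approximating rigid motion $r_K$ so that $\nm{v-r_K}{H^1(K)}$ is controlled by $\nm{\eps(v)}{L^2(K)}$ via the classical Korn inequality on $K$, and then to quantify the failure of the family $\{r_K\}_{K\in\mc{T}_h}$ to assemble into a globally $H^1$-conforming vector field through the tangential mismatches of its rigid-motion components across interior faces. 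The boundary-vertex decomposition $\mc{V}_h^B=\mc{V}_h^{\flat}\cup\mc{V}_h^{\#}$ introduced in \S 2.1, together with the choice of $d$ independent boundary edges at each sharp node, supplies the anchoring data needed on $\pa\Om$, while the $\nm{v}{L^2}^2$ term absorbs the residual global translation/rotation indeterminacy.

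The main technical obstacle is the reduction of the face penalty from the full jump $\jump{v}$ used in~\cite{Brenner:2004} to the substantially smaller quantity $\jump{\Pi_f v}$; this refinement is precisely what later allows the degrees of freedom associated with the full gradient tensor on faces or edges to be dropped in the finite element construction. To justify it, I would split $\jump{v}=\Pi_f\jump{v}+(I-\Pi_f)\jump{v}$ on each face $f$, use that $(I-\Pi_f)\jump{v}$ is by definition $L^2(f)$-orthogonal to the tangential rigid-motion component of $[P_1(f)]^d$, and absorb the orthogonal piece into $\nm{\eps_h(v)}{L^2}^2$ by combining scaled trace and inverse inequalities on the two simplices sharing $f$ with the local Korn inequality on those simplices; shape-regularity supplies the correct $h_f^{-1}$ scaling. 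Summing the resulting local estimates over $f\in\mc{F}_h$ and combining with the top-order bound completes the proof of~\eqref{eq:diskorn}.
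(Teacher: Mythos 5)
Your overall decomposition matches the paper's: the Hessian is controlled by the elementwise strain gradient through the algebraic inequality~\eqref{eq:algkorn}, and the first-order part is handled by a broken Korn's inequality for piecewise $H^1$ vector fields. But whereas you propose to re-derive that broken first Korn's inequality from scratch in the spirit of~\cite{Brenner:2004}, the paper simply invokes the version due to Mardal and Winther~\cite{MardalWinther:2006}---the inequality you write down, with the face penalty on $\jump{\Pi_f v}$ and the $L^2$-projection onto $[P_{1,-}(f)]^d$, is theirs verbatim---and the proof is then two lines.

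Your sketch of the re-derivation is underdeveloped precisely where the substance lies. The reduction from the full jump $\jump{v}$ to the projected jump $\jump{\Pi_f v}$ cannot be accomplished just by ``absorbing $(I-\Pi_f)\jump{v}$ into the strain via trace and inverse inequalities together with the local Korn inequality'': the jump of the elementwise best rigid motions $r_K$ across a face is itself a rigid motion and therefore strain-free, so the local Korn inequality gives no control over it. What saves the reduction---and what your proposal leaves unstated---is the observation that the trace on a hyperplane $f$ of any infinitesimal rigid motion of $\R^d$ has tangential part lying in $\text{RM}(f)$, so $\jump{r}|_f\in[P_{1,-}(f)]^d$ and hence $(I-\Pi_f)\jump{r}=0$; only after this does $(I-\Pi_f)\jump{v}=(I-\Pi_f)\jump{v-r}$ become a quantity controlled by $\eps_h(v)$ through local Korn and trace scaling. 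Even granting the reduction, you still owe the core of the Brenner/Mardal--Winther argument, namely that $\sum_{K}\nm{\na r_K}{L^2(K)}^2$ is bounded by the penalized jumps plus $\nm{v}{L^2}^2$: this global rigidity estimate is nontrivial and your sketch does not address it. Finally, the boundary-vertex decomposition $\mc{V}_h^B=\mc{V}_h^{\,\flat}\cup\mc{V}_h^{\;\#}$ plays no role in Theorem~\ref{thm:hkorn} (it is introduced to build the regularized interpolant in \S 3), and the Mardal--Winther inequality imposes no boundary conditions on $v$, so invoking that decomposition here is a red herring.
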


For a piecewise vector filed $v$, the inequality~\eqref{eq:diskorn} improves the one proved in~\cite[Theorem 2]{LiMingShi:2017} by removing the jump term
\[
\sum_{i=1}^2\sum_{f\in\mc{F}_h}h_f^{-1}\nm{\jump{\Pi_{f}(v_{,i})}}{L^2(f)}^2.
\]
This term stands for the jump of the gradient tensor of the vector field $v$ across the element boundary. This would simplify the construction of the robust strain gradient elements as shown in the next two parts.
\vskip .5cm
\noindent{\em Proof of Theorem~\ref{thm:hkorn}\;}
Integrating~\eqref{eq:algkorn} over element $K\in\mc{T}_h$, we obtain,
\[
\nm{\na\eps(v)}{L^2(K)}^2\ge\frac{1}{4}\nm{\na^2 v}{L^2(K)}^2.
\]
Summing up all $K\in\mc{T}_h$, we get
\begin{equation}\label{eq:sghessian}
\nm{\na_h\eps_h(v)}{L^2}\ge\frac{1}{4}\nm{\na_h^2 v}{L^2}^2,
\end{equation}
which together with the following Korn's inequality for a piecewise H$^1$ vector filed proved by {\sc Mardal and Winther}~\cite{MardalWinther:2006}
\[
\nm{v}{H_h^1}^2\le C\Lr{\|\eps_h(v)\|_{L^2}^2+\nm{v}{L^2}^2
+\sum_{f\in\mc{F}_h}h_f^{-1}\nm{\jump{\Pi_fv}}{L^2(f)}^2}
\]
implies~\eqref{eq:diskorn}.
\qed

We shall frequently use the following trace inequalities.
\begin{lemma}
For any Lipschitz domain $D$, there exists $C$ depending on $D$ such that
\begin{equation}\label{eq:tracedomain}
\nm{v}{L^2(\pa D)}\le C\nm{v}{L^2(D)}^{1/2}\nm{v}{H^1(D)}^{1/2}.
\end{equation}

For an element $K$, there exists $C$ independent of $h_K$, but depends on $\gamma$ such that
\begin{equation}\label{eq:eletrace}
\nm{v}{L^2(\pa K)}\le C\Lr{h_K^{-1/2}\nm{v}{L^2(K)}+h_K^{1/2}\nm{\na v}{L^2(K)}}.
\end{equation}

If $v\in\mb{P}_m(K)$, then there exists $C$ independent of $v$, but depends on $\gamma$ and $m$ such that
\begin{equation}\label{eq:polytrace}
\nm{v}{L^2(\pa K)}\le Ch_K^{-1/2}\nm{v}{L^2(K)}.
\end{equation}
\end{lemma}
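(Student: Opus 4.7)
The plan is to prove the three inequalities in the order listed, since each one reduces to the preceding via a scaling or finite-dimensional argument. For \eqref{eq:tracedomain} I will use a divergence-theorem identity; for \eqref{eq:eletrace} a transfer to a reference element via affine mapping; and for \eqref{eq:polytrace} equivalence of norms on a finite-dimensional space.

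For \eqref{eq:tracedomain}, I will appeal to the Lipschitz structure of $\pa D$ to produce a vector field $\xi\in[C^1(\ov{D})]^d$ with $\xi\cdot n\ge c_0>0$ almost everywhere on $\pa D$ (the standard Ne\v{c}as construction via a partition of unity subordinate to the boundary charts together with locally smooth inward normals). Applying the divergence theorem to $v^2\xi$ gives
\[
\int_{\pa D}v^2(\xi\cdot n)\dsx=\int_D\divop(v^2\xi)\dx=\int_D\lr{v^2\divop\xi+2v\,\xi\cdot\na v}\dx.
\]
Bounding the left-hand side from below by $c_0\nm{v}{L^2(\pa D)}^2$ and applying Cauchy–Schwarz on the right yields $\nm{v}{L^2(\pa D)}^2\le C(\nm{v}{L^2(D)}^2+\nm{v}{L^2(D)}\nm{\na v}{L^2(D)})$. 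Bounding the first term by $\nm{v}{L^2(D)}\nm{v}{H^1(D)}$ and the second by the same quantity gives the geometric-mean form \eqref{eq:tracedomain} for $v\in C^1(\ov{D})$, and the general case follows by density.

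For \eqref{eq:eletrace}, I will use the affine map $F_K\colon\wh{K}\to K$ from a fixed reference simplex, with $\abs{\det DF_K}\sim h_K^d$ and $\|DF_K\|\sim h_K$, $\|DF_K^{-1}\|\sim h_K^{-1}$ (the implicit constants controlled by the chunkiness parameter $\ga$). Setting $\wh{v}=v\circ F_K$, I apply \eqref{eq:tracedomain} on $\wh{K}$ and track the scaling: $\nm{v}{L^2(\pa K)}^2\sim h_K^{d-1}\nm{\wh{v}}{L^2(\pa\wh{K})}^2$, $\nm{v}{L^2(K)}^2\sim h_K^d\nm{\wh{v}}{L^2(\wh{K})}^2$, and $\nm{\na v}{L^2(K)}^2\sim h_K^{d-2}\nm{\wh\na\wh{v}}{L^2(\wh{K})}^2$. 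Combining these with the elementary bound $ab\le\tfrac12(h_K^{-1}a^2+h_K b^2)$ applied to $a=\nm{v}{L^2(K)}$ and $b=\nm{\na v}{L^2(K)}$ produces \eqref{eq:eletrace}.

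Finally, for \eqref{eq:polytrace} I will use that both $\wh{v}\mapsto\nm{\wh{v}}{L^2(\pa\wh{K})}$ and $\wh{v}\mapsto\nm{\wh{v}}{L^2(\wh{K})}$ are norms on the finite-dimensional space $\mb{P}_m(\wh{K})$ and are hence equivalent with a constant $C_m$ depending only on $m$; pulling back along $F_K$ converts the differing powers of $h_K$ into the factor $h_K^{-1/2}$. The only genuinely delicate step is the existence of the transverse vector field $\xi$ in \eqref{eq:tracedomain}, for which I would cite the standard Lipschitz-domain construction rather than reprove it; all remaining steps are routine scaling.
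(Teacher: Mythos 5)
Your proposal is correct and follows essentially the same route the paper takes: the paper simply cites Grisvard (Theorem 1.5.1.10) for \eqref{eq:tracedomain} — and the cited proof is precisely the Ne\v{c}as divergence-theorem argument with a transverse vector field that you spell out — then notes that \eqref{eq:eletrace} follows by a scaling argument, and that \eqref{eq:polytrace} follows from \eqref{eq:eletrace} together with the inverse inequality on $\mb{P}_m(K)$. Your derivation of \eqref{eq:polytrace} via norm equivalence on $\mb{P}_m(\wh{K})$ is the same thing in substance, since the inverse inequality is itself obtained by that norm equivalence and pullback; the scaling computations you report are all correct.
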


The multiplicative type trace inequality~\eqref{eq:tracedomain} may be found in~\cite[Theorem 1.5.1.10]{Grisvard:1985}, while~\eqref{eq:eletrace} is a direct consequence of~\eqref{eq:tracedomain}. The third trace inequality is a combination of~\eqref{eq:eletrace} and the inverse inequality for any polynomial $v\in\mb{P}_m(K)$.
\section{Interpolation for nonsmooth data}
Motivated by the broken H$^2-$Korn's inequality~\eqref{eq:diskorn}, we conclude that the H$^1-$conforming but H$^2-$nonconforming finite elements are natural choices for approximating Problem~\eqref{eq:disvara}. A family of rectangular elements in this vein may be found in~\cite{LiaoMing:2019}, and two nonconforming tetrahedron elements were constructed and analyzed in~\cite{Wang:2020}. Note that the tensor product of certain finite elements for the singular perturbation problem of fourth order may also be used to approximate~\eqref{eq:sgbvp}, we refer to~\cite{Semper:1992, Semper:1994, GuzmanLeykekhmanNeilan:2012, Tai:2001, Tai:2006} and references therein for such elements.  In what follows, we select the Specht triangle~\cite{Specht:1988} and the NZT tetrahedron~\cite{WangShiXu:2007} as the representatives. The Specht triangle is a successful plate bending element,~\textit{which passes all the patch tests and performs excellently, and is one of the best thin plate triangles with $9$ degrees of freedom that currently available}~\cite[Quatation in p. 345]{ZienkiewiczTaylor:2009}. The NZT tetrahedron may be regarded as a three-dimensional extension of the Specht triangle.

The Specht triangle and the NZT tetrahedron may be defined by the finite element triple $(K,P_K,\Sigma_K)$~\cite{Ciarlet:1978} in a unifying way as following: $K$ is a simplex, and
\[
\left\{\begin{aligned}
P_K&=Z_K+b_K\mb{P}_1(K),\\
\Sigma_K&=\{p(a_i),(e_{ij}\cdot\na p)(a_i),1\leq i\neq j\leq d+1\}
\end{aligned}\right.
\]
with extra constraints
\begin{equation}\label{eq:spechtconstraint}
\frac{1}{\abs{f_i}}\int_{f_i}\pa_n p=\frac{1}{d}\sum_{1\leq k\leq d+1,k\neq i}\pa_n p(a_k),\quad i=1,\cdots,d+1,
\end{equation}
where $f_i$ is a $(d-1)$ dimensional simplex opposite to vertex $a_i$, and $e_{ij}$ is the edge vector from $a_i$ to $a_j$. Here $Z_K$ is the Zienkiewicz space defined by
\[
Z_K=\mb{P}_2(K)+\text{Span}\set{\lam_i^2\lam_j-\lam_i\lam_j^2}{1\le i\not= j\le d+1},
\]
where $\lam_i$ is the barycentric coordinates associated with the vertex $a_i$.

The finite element space is define by
\[
X_h{:}=\set{v\in H^1(\Om)}{v|_K\in P_K, K\in\mc{T}_h;v(a),\na v(a)\,\text{are continuous for}\,a\in\mc{V}_h}.
\]
The corresponding homogenous finite element space is defined by
\[
X_h^{\,0}{:}=\set{v\in X_h}{v(a),\na v(a)\,\text{vanish on $\pa\Om$ for}\,a\in\mc{V}_h}.
\]
It is clear that $X_h^{\,0}\subset H_0^1(\Om)$. We denote $V_h=[X_h^{\,0}]^d$, and approximating problem reads as: Find $u_h\in V_h$ such that
\begin{equation}\label{eq:disvara}
a_h(u_h,v)=(f,v)\quad\text{for all\quad} v\in V_h,
\end{equation}
where the bilinear form $a_h$ is defined for any $v,w\in V_h$ as
\[
a_h(v,w){:}=(\C\eps(v),\eps(w))+\iota^2(\D\na_h\eps(v),\na_h\eps(w))
\]
with
\[
(\D\na_h\eps(v),\na_h\eps(w)){:}=\sum_{K\in\mc{T}_h}\int_K\D\na\eps(v)\na\eps(w)\dx.
\]
The energy norm is defined as $\wnm{v}{:}=\Lr{\nm{v}{H^1}^2+\iota^2\nm{\na_h^2 v}{L^2}^2}^{1/2}$. The bilinear form is coercive in this energy norm as shown in the next lemma.
\begin{lemma}\label{lema:coer1}
For any $v\in V_h$,
\begin{equation}\label{eq:coer1}
a_h(v,v)\ge\frac{\mu}{2+2\abs{C_p}^2}\wnm{v}^2,
\end{equation}
where $C_p$ appears in the Poincar\'e inequality
\begin{equation}\label{eq:poincare}
\nm{v}{L^2}\le C_p\nm{\na v}{L^2}\qquad\text{for all\quad}v\in [H_0^1(\Om)]^d.
\end{equation}
\end{lemma}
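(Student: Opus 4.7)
The plan is to show coercivity by first exploiting the positive definiteness of the tensors $\C$ and $\D$ to produce a lower bound in terms of $\eps(v)$ and $\nabla_h\eps_h(v)$, and then translate this back to full control of $\wnm{v}$ using the first Korn inequality, the Poincar\'e inequality, and the elementwise algebraic inequality already proved in Theorem \ref{lema:h2korn}.

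First I would plug $w=v$ into the definition of $a_h$. From the explicit formulas for $\C_{ijkl}$ and $\D_{ijklmn}$ the quadratic forms split as $(\C\eps(v),\eps(v)) = \lam\nm{\divop v}{L^2}^2 + 2\mu\nm{\eps(v)}{L^2}^2$ and analogously $(\D\nabla_h\eps_h(v),\nabla_h\eps_h(v)) = \lam\nm{\divop_h\eps_h(v)}{L^2}^2+2\mu\nm{\nabla_h\eps_h(v)}{L^2}^2$. Since $\lam\ge 0$, I drop those terms to obtain the clean bound
\[
a_h(v,v)\ \ge\ 2\mu\nm{\eps(v)}{L^2}^2 + 2\mu\iota^2\nm{\nabla_h\eps_h(v)}{L^2}^2.
\]

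Next I would build an upper bound for $\wnm{v}^2 = \nm{v}{L^2}^2+\nm{\nabla v}{L^2}^2+\iota^2\nm{\nabla_h^2 v}{L^2}^2$. Because $V_h\subset [H_0^1(\Om)]^d$ by construction of the element, the Poincar\'e inequality \eqref{eq:poincare} yields $\nm{v}{L^2}^2\le C_p^2\nm{\nabla v}{L^2}^2$, so $\nm{v}{L^2}^2+\nm{\nabla v}{L^2}^2 \le (1+C_p^2)\nm{\nabla v}{L^2}^2$, and the first Korn inequality \eqref{eq:1stkorn} gives $\nm{\nabla v}{L^2}^2 \le 2\nm{\eps(v)}{L^2}^2$. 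Summing the elementwise algebraic estimate \eqref{eq:algkorn} over $K\in\mc{T}_h$ provides $\nm{\nabla_h^2 v}{L^2}^2 \le 4\nm{\nabla_h\eps_h(v)}{L^2}^2$. Combining these gives
\[
\wnm{v}^2\ \le\ 2(1+C_p^2)\nm{\eps(v)}{L^2}^2 + 4\iota^2\nm{\nabla_h\eps_h(v)}{L^2}^2.
\]

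Finally I would compare the two bounds. Dividing the upper bound by the factor $2(1+C_p^2)$ and using $1+C_p^2\ge 1$ on the gradient--of--strain term shows
\[
\frac{\mu}{2+2|C_p|^2}\wnm{v}^2 \le \mu\nm{\eps(v)}{L^2}^2 + \frac{2\mu}{1+C_p^2}\iota^2\nm{\nabla_h\eps_h(v)}{L^2}^2 \le a_h(v,v),
\]
which is exactly \eqref{eq:coer1}. There is no real obstacle here: all analytic ingredients are already in place. The only point that demands care is verifying that Korn's first inequality is legitimately applicable, namely that $V_h\subset [H_0^1(\Om)]^d$; this follows because the discrete space $X_h$ is defined as a subspace of $H^1(\Om)$ and $X_h^{\,0}$ imposes vanishing nodal values and nodal gradients on $\partial\Om$, which together with $C^0$ conformity force the trace to vanish.
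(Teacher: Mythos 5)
Your proof is correct and uses exactly the same ingredients as the paper's: the lower bound $a_h(v,v)\ge 2\mu\bigl(\nm{\eps(v)}{L^2}^2+\iota^2\nm{\na_h\eps_h(v)}{L^2}^2\bigr)$, followed by the first Korn inequality \eqref{eq:1stkorn}, the elementwise algebraic bound \eqref{eq:algkorn}/\eqref{eq:sghessian}, and the Poincar\'e inequality \eqref{eq:poincare}; the only cosmetic difference is that you assemble the chain by bounding $\wnm{v}^2$ from above and then comparing, rather than pushing $a_h(v,v)$ downward through the same inequalities. (One small slip that does not affect the argument: the $\lam$-contribution to $(\D\na_h\eps_h(v),\na_h\eps_h(v))$ is $\lam\nm{\na_h\divop v}{L^2}^2$, i.e.\ the gradient of the trace of $\eps$, not $\lam\nm{\divop_h\eps_h(v)}{L^2}^2$; since you drop this nonnegative term immediately, the conclusion is unchanged.)
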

The estimate~\eqref{eq:coer1} immediately implies the wellposedness of Problem~\eqref{eq:disvara} for any fixed $\iota$.
\begin{proof}
For any $v\in V_h$, there holds
\[
a_h(v,v)\ge 2\mu\Lr{\nm{\eps(v)}{L^2}^2+\iota^2\nm{\na_h\eps(v)}{L^2}^2}.
\]
Using the first Korn's inequality~\eqref{eq:1stkorn} and the estimate~\eqref{eq:sghessian}, we obtain
\[
a_h(v,v)\ge\frac{\mu}{2}\Lr{\nm{\na v}{L^2}^2+\iota^2\nm{\na_h^2 v}{L^2}^2}.
\]
The coercivity estimate~\eqref{eq:coer1} follows by using the Poincare's inequality~\eqref{eq:poincare}.
\end{proof}

The standard interpolation estimate for the above elements reads as~\cite{CiarletRaviart:1972},
\[
\sum_{j=0}^3h^j\nm{\na^j_h(v-\vpi v)}{L^2}\le Ch^3\nm{\na^3 v}{L^2},
\]
This interpolant is unbounded in $H^2(\Om)$, which is even not well-defined for a function in $H^2(\Om)$. 

Our definition for the regularized interpolant is a combination of a regularized interpolant in~\cite{GuzmanLeykekhmanNeilan:2012} and an enriching operator defined in~\cite{Neilan:2019}.

Define $I_h{:}=\vpi_h\circ\vpi_C$ with $\vpi_C: H_0^1(\Om)\to L_h$ the Scott-Zhang interpolant~\cite{ScottZhang:1990}, where $L_h$ is the quadratic Lagrangian finite element space with vanishing trace. The operator $\vpi_h: L_h\to X_h$ is locally defined as follows.
\begin{enumerate}
\item If $a\in\mc{V}_h^{\,I}$ is an interior vertex, then we fix an element $K_a$ from $\mc{T}_h(a)$,
\[
\vpi_hw(a){:}=w(a)\quad\text{and\quad}\na\vpi_hw(a){:}=\na w_{|_{K_a}}(a).
\]

\item If $a\in\mc{V}_h^{\,\flat}$ is a flat node, then we fix an element $K_a$ from $\mc{T}_h^{\,B}(a)$, 
\[
\vpi_hw(a){:}=0\quad\text{and}\quad\na\vpi_hw(a){:}=\na w_{|_{K_a}}(a).
\]

\item If $a\in\mc{V}_h^{\,\#}$ is a sharp node, then 
\[
\vpi_hw(a){:}=0\quad\text{and}\quad\na\vpi_hw(a){:}=0.
\]
\end{enumerate}

\begin{remark}
For any $w\in H_0^1(\Om)\cap L_h$, there holds $\vpi_hw\in H_0^1(\Om)$.
\end{remark}

For any $v\in H_0^2(\Om)$, we define $I_h^{\,0}:H_0^2(\Om)\to X_h^{\,0}$ with
\[
I_h^{\,0}v(a)=I_hv(a),\,\na I_h^{\,0}v(a)=\na I_hv(a),\,\text{for all}\,\,a\in\mc{V}_h^{\,I}.
\]

\begin{theorem}\label{thm:reginter}
There exists an operator $I_h: H_0^1(\Om)\to X_h$ such that for any $v\in H^m(\Om)$ with $2\le m\le3$, there holds
\begin{equation}\label{interpolation:1}
\nm{v-I_h v}{H^k_h}\le Ch^{m-k}\abs{v}_{H^m},\quad 0\le k\le m.
\end{equation}

Moreover, there exists $I_h^0: H_0^2(\Om)\to X_h^{\,0}$ such that for any $v\in H^m(\Om)\cap H_0^2(\Om)$ with $2\le m\le 3$, there holds
\begin{equation}\label{interpolation:2}
\nm{v-I_h^{\,0} v}{H^k_h}\le Ch^{m-k}\abs{v}_{H^m},\quad 0\le k\le m.
\end{equation}
\end{theorem}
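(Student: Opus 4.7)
The plan is to decompose
\[
v - I_h v = (v - \vpi_C v) + (\vpi_C v - \vpi_h\vpi_C v)
\]
and estimate the two summands separately. The first is handled by the standard Scott--Zhang estimate for the $H^1_0$-preserving quadratic Lagrange interpolant, $\nm{v-\vpi_C v}{H^k_h}\le Ch^{m-k}\abs{v}_{H^m}$ for $0\le k\le m$ and $2\le m\le 3$, which already meets the target bound on that summand.

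For the second summand, set $w{:}=\vpi_C v\in L_h$ and argue element by element. On each simplex $K$ the difference $w-\vpi_h w$ lies in a finite-dimensional polynomial space whose side constraint~\eqref{eq:spechtconstraint} is linearly determined by the vertex data, so a reference-element scaling argument based on the local DoFs gives
\[
\nm{w-\vpi_h w}{H^k(K)}^2\le Ch_K^{d-2k}\sum_{a\in\mc{V}(K)}\Lr{\abs{w(a)-\vpi_h w(a)}^2+h_K^2\abs{\na w|_K(a)-\na\vpi_h w(a)}^2}.
\]
Since $\vpi_C$ preserves the zero boundary trace of $v$, one has $w\equiv 0$ on $\pa\Om$, hence every boundary-vertex value vanishes and every tangential derivative of $w$ along a boundary edge vanishes. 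In particular, at a sharp node $a\in\mc{V}_h^{\,\#}$ the basis $\mc{E}_a$ spans $\R^d$, so $\na w(a)=0$, eliminating the sharp-node discrepancy.

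The main obstacle is the gradient discrepancy at interior and flat boundary vertices. For such $a$ with $K\neq K_a$, the quantity $\na w|_{K_a}(a)-\na w|_K(a)$ is a telescoping sum of interior face jumps $\jump{\na w}$ along a chain inside the star $\mc{T}_h(a)$. Writing $\jump{\na w}=\jump{\na(w-v)}$ and combining the trace inequality~\eqref{eq:eletrace} with the Scott--Zhang bounds $\nm{\na^j(w-v)}{L^2(K')}\le Ch^{m-j}\abs{v}_{H^m(\omega_{K'})}$ for $j=1,2$ yields $\nm{\jump{\na w}}{L^2(f)}^2\le Ch^{2m-3}\abs{v}_{H^m(\omega_f)}^2$; the polynomial trace inequality~\eqref{eq:polytrace} together with an $L^2$-to-$L^\infty$ inverse estimate on the $(d-1)$-face $f$ then converts this into the pointwise bound $\abs{\na w|_{K_a}(a)-\na w|_K(a)}^2\le Ch^{2m-d-2}\abs{v}_{H^m(\omega_a)}^2$. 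Inserted into the scaling estimate above, this gives $\nm{w-\vpi_h w}{H^k(K)}^2\le Ch_K^{2(m-k)}\abs{v}_{H^m(\omega_K)}^2$, and summation over $K$ with finite overlap of stars completes~\eqref{interpolation:1}.

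For~\eqref{interpolation:2}, the hypothesis $v\in H_0^2(\Om)$ gives $\na v\equiv 0$ on $\pa\Om$ in the trace sense, so zeroing the remaining boundary-vertex gradients to pass from $I_h v$ to $I_h^0 v$ introduces only modifications consistent with $v$. The element-local argument is then repeated: the telescoping-jump estimate still controls discrepancies at interior vertices, and at flat boundary vertices the vanishing boundary trace of $\na v$ allows the chain of jumps to be closed through a boundary face rather than a bulk path, yielding the same power of $h$ and hence~\eqref{interpolation:2}.
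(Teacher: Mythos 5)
Your overall strategy—decomposing $v-I_hv = (v-\vpi_C v)+(\vpi_C v-\vpi_h\vpi_C v)$, applying a scaling argument in terms of the vertex DoFs, and telescoping the gradient discrepancy through chains of adjacent elements—is essentially the paper's argument. The powers of $h$ you trace through with the element trace inequality and the $L^2$-to-$L^\infty$ inverse estimate on faces all check out.

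However, there is a genuine gap in your treatment of sharp nodes. You assert that since $\mc{E}_a$ spans $\R^d$ and $w\equiv 0$ on $\pa\Om$, ``$\na w(a)=0$, eliminating the sharp-node discrepancy.'' This treats $\na w$ as single-valued at $a$, but $w=\vpi_Cv$ is only $C^0$ across element boundaries, so $\na w|_K(a)$ depends on the element $K$. The boundary edges $e_i\in\mc{E}_a$ give $\pa_{e_i}w|_{K'}(a)=0$ \emph{only} for an element $K'$ actually containing the edge $e_i$; for a generic $K$ sharing the vertex $a$ but not containing $e_i$, one cannot conclude that $\pa_{e_i}w|_K(a)$ vanishes. (In a fan of elements around a corner vertex, the interior elements of the fan have no boundary edges at $a$, yet they still contribute to the scaling bound.) The correct move — and the one the paper makes — is to expand $\na w|_K(a)$ in the basis $\mc{E}_a$, write each component as $\pa_{e_i}w|_K(a) = \pa_{e_i}w|_K(a) - \pa_{e_i}w|_{K_{e_i}}(a)$ using that the second term is zero, and then telescope each of these $d$ differences through a chain from $K$ to $K_{e_i}$, giving a sum of interior normal-jump terms just as at interior and flat nodes. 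With that correction your estimate chain for sharp nodes matches the others and closes. Your sketch for $I_h^0$ is somewhat terse (the paper itself waves this off), but the key fact you invoke — that $v\in H_0^2$ kills the normal derivative of $v$ on $\pa\Om$, so the remaining boundary-vertex gradient discrepancy of $w$ reduces to $\na(w-v)$ and is controllable by the same inverse/trace machinery — is the right idea.
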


The interpolant $I_h^0 v$ is enough to our ends, while the more general interpolant $I_h v$ is a useful tool to deal with the strain gradient model with other boundary conditions; See, e.g.,~\cite{Aifantis:2011}. 
\begin{proof}
For any $\phi\in P_K$, a standard scaling argument yields that
\[
\nm{\phi}{L^2(K)}^2\le Ch_K^d\sum_{a\in\mc{V}_K}\Lr{\abs{\phi(a)}^2+h_K^2\abs{\na\phi(a)}^2},
\]
where $\mc{V}_K$ is the set containing all vertices of $K$. Let $w=\vpi_C v$ and $\phi=w-\vpi_h w$. Noting $\phi(a)=0$, we obtain
\begin{equation}\label{eq:scaling}
\nm{\phi}{L^2(K)}^2\le Ch_K^{d+2}\sum_{a\in\mc{V}_K}\labs{\na\phi(a)}^2.
\end{equation}

If the node $a\in\mc{V}_h^{\,I}$ or $a\in\mc{V}_h^{\,\flat}$, there has $\na\vpi_hw(a)=\lr{\na w_{|_{K_a}}}(a)$. Then we select a sequence of elements $\{K_1,\cdots,K_{J_a}\}\subset\mc{T}_h(a)$ such that $K_1=K,K_{J_a}=K_a$, and $f_j=K_j\cap K_{j+1}$ is a common $(d-1)$-dimensional simplex of $K_j$ and $K_{j+1}$. We write the right-hand side of the above inequality as the telescopic sum:
\[
\abs{\na\phi(a)}^2\le \sum_{j=1}^{J_a-1}\labs{\na w_{|_{K_j}}(a)-\na w_{|_{K_{j+1}}}(a)}^2.
\]
Note that $w$ is continuous across $f_j$, we write
\begin{equation}\label{eq:internode}
\abs{\na\phi(a)}^2\le C\sum_{f\in\mc{F}_a^{\,I}}h_F^{-2}\nm{\jump{\pa_{n_f}w}}{L^2(f)}^2.
\end{equation}

If the node $a\in\mc{V}_h^{\#}$, the edges $\mc{E}_a\subset\mc{E}_h^{\,B}(a)$ provide a basis of $\mb{R}^d$, we have
\[
\abs{\na\phi(a)}\le C\sum_{e\in\mc{E}_a}\abs{\pa_{e}w_{|_K}(a)-\pa_ew_{|_{K_e}}(a)},
\]
where $\pa_{e}w=e\cdot\na w$, $K_e$ is a simplex with the boundary edge $e$, and we employ the fact $w_{|_e}=0$. Proceeding along the same line by connecting $K$ to $K_e$ with a sequence of simplices, we obtain
\begin{equation}\label{eq:flatnode}
\abs{\na\phi(a)}^2\le C\sum_{f\in\mc{F}_a^{\,I}}\nm{\jump{\pa w/\pa_{n_f}}}{L^2(f)}^2.
\end{equation}

Substituting~\eqref{eq:internode},~\eqref{eq:flatnode} into~\eqref{eq:scaling}, we obtain
\[
\nm{\phi}{L^2(K)}\le Ch_K^{3/2}\sum_{a\in\mc{V}_K}\sum_{f\in\mc{F}_a^{\,I}}\nm{\jump{\pa_nw}}{L^2(f)},
\]
where we have used the fact that $h_K\simeq h_{f}$ because $\mc{T}_h$ is locally quasi-uniform.

Therefore, using the above inequality, the estimate for the Scott-Zhang interpolant, the inverse inequality and the trace inequality ~\eqref{eq:eletrace}, we obtain
\begin{align*}
\nm{v-I_hv}{H^k(K)}&\le\nm{v-\vpi_Cv}{H^k(K)}+\nm{\vpi_Cv-\vpi_h\vpi_Cv}{H^k(K)}\\
&\le Ch_K^{m-k}\nm{v}{H^m(K)}+Ch_K^{\frac{3}{2}-k}\sum_{a\in\mc{V}_K}\sum_{f\in\mc{F}_a^{\,I}}\nm{\pa_n(\vpi_C v-v)}{L^2(f)}\\
&\le Ch_K^{m-k}\nm{v}{H^m(\mc{T}_K)},
\end{align*}
where $\mc{T}_K=\cup_{a\in\mc{V}_K}\mc{T}_h(a)$ is the local element star of $K$. Summing up the above inequalities for $K\in\mc{T}_h$, we obtain~\eqref{interpolation:1}.

Next, for any $v\in H_0^2(\Om)\cap H^m(\Om)$, the estimate~\eqref{interpolation:2} may be proceeded along the same line that leads to~\eqref{interpolation:1}, we omit the details.
\end{proof}
%
%
\section{Error Estimate for Less Smooth Solution}
The standard error estimate argument is valid under the regularity assumption $u\in H^s(\Om)$ with $s\ge3$, which is usually invalid for the point loading or nonconvex domain~\cite{BlumRannacher:1980}. In this part we shall exploit enriching operator to derive a new error estimate for Problem~\eqref{eq:sgbvp} with less smooth solution. The enriching operator measures the distance between $V_h$ and $H^2(\Om)$, which was firstly introduced by {\sc Brenner}~\cite{Brenner:1994, Brenner:1996} to analyze nonconforming elements in the context of fast solvers. {\sc Li, Ming and Shi}~\cite[Lemma 4.1]{LiMingShi:2020} have constructed an enriching operator for the quadratic Specht triangle and have obtained optimal error estimate for approximating the biharmonc problems with rough solution. The construction and the proof therein equally applies to the Specht triangle. We also note that there are many different type enriching operators for Morley's triangle; See, e.g.,~\cite{Gallistl:2015, Veeser:2019}. In what follows, we shall construct such enriching operator for the NZT tetrahedron with the aid of the ninth polynomial $C^1$-conforming element introduced by {\sc Zhang}~\cite{Zhang:2009}. The enriching operator $E_h{:}X_h^{\,0}\rightarrow H^2_0(\Om)$ is defined as follows.
\begin{enumerate}
\item For any $a\in\mc{V}_h^{\,I}$, we fix a element $K_a$ from element star $\mc{T}_h(a)$, 
\[
\lr{\na^{\al}E_hv}(a){:}=\lr{\na^{\al}v|_{K_a}}(a),\, K_a\in\mc{T}_h(a),\,\abs{\al}\le 4.
\]

\item For any edge $e\in\mc{E}_h^{\,I}$, we equip the edge $e$ with unit direction vectors $\mc{S}_e=\{s_1,s_2\}$, where $s_i$ is orthogonal to $e$, and $\mc{S}_e\cup\{e\}$ provides a basis of $\mb{R}^3$. We fix an element from element star $K_e\in\mc{T}_h(e)$. For $a$ the middle point of an edge $e$, 
\begin{equation}\label{eq:edge1}
\pa_{s_i}E_hv(a){:}=\lr{\pa_{s_i}v|_{K_e}}(a),\quad s_i\in\mc{S}_e.
\end{equation}
For $b$ and $c$ the equally distributed interior points of the edge $e$, for $p=b,c$, 
\begin{equation}\label{eq:edge2}
\dfrac{\pa^2 E_hv}{\pa s_i\pa s_j} (p)=\dfrac{\pa^2 v|_{K_e}}{\pa s_i\pa s_j}(p),\quad s_i,s_j\in\mc{S}_e.
\end{equation}

\item For any $f\in\mc{F}_h^{\,I}$, and for any $w\mb{P}_0(f)$, we define
\[
\int_fE_hvw\,\md\sigma(x){:}=\int_fvw\,\md\sigma(x).
\]
and for any $w\in\mb{P}_2(f)$,
\begin{equation}\label{eq:moment}
\int_f\pa_n\lr{E_hv}w\,\md\sigma(x){:}=\int_f\{\!\{\pa_nv\}\!\}w\,\md\sigma(x).
\end{equation}

\item For any $K\in\mc{T}_h$, and for $w\in\mb{P}_1(K)$,
\[
\int_KE_hvw\dx{:}=\int_Kvw\dx.
\]
\item All the degrees of freedom of $E_h v$ vanish on the $\pa\Om$.
\end{enumerate}
We summarize the properties of the enriching operator in the following lemma.
\begin{lemma}
The enriching operator $E_h$ has the following properties:
\begin{enumerate}
\item Petrov-Galerkin orthogonality: For any $v\in V_h$,
\begin{equation}\label{eq:Galerkinorth}
a_h(v-E_hv,w)=0\qquad\text{for all\quad}w\in W_h,
\end{equation}
where $W_h$ is the tensorized $\mb{P}_2$ element space with vanishing trace.

\item $E_h$ is stable in the sense that
\begin{equation}\label{eq:stable}
\wnm{E_h v}\le \alpha\wnm{v}\qquad\text{for all\quad}v\in V_h.
\end{equation}

\item For any $v\in V_h$, we have
\begin{equation}\label{eq:l2enrich}
\nm{\na_h^k(v-E_h v)}{L^2}\le\beta h^{j-k}\nm{\na_h^jv}{L^2},\quad 0\le k\le j\le2.
\end{equation}
\end{enumerate}
\end{lemma}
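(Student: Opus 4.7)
The plan is to verify (1)--(3) in order. Item (1) is the structural heart of the construction and will be established by element-wise integration by parts driven by the moment conditions built into the definition of $E_h$; items (2) and (3) then reduce to standard scaling and inverse-inequality arguments on a reference simplex.

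For the Petrov-Galerkin orthogonality~\eqref{eq:Galerkinorth}, I would fix $w\in W_h$, so that $\eps(w)|_K\in[\mb{P}_1(K)]^{d\x d}$ and $\na\eps(w)|_K\in[\mb{P}_0(K)]^{d\x d\x d}$. Integrating by parts on each $K\in\mc{T}_h$ in both parts of $a_h(v-E_hv,w)$, the volume integrand in the first term reduces to a contraction of $v-E_hv$ with $\divop(\C\eps(w))\in[\mb{P}_0(K)]^d$, and the volume integrand in the second term vanishes outright because $\na\eps(w)|_K$ is constant. The remaining face integrals, after a further tangential integration by parts in the strain-gradient term, split into (i) $\int_f(v-E_hv)\cdot q\md\sigma(x)$ with $q\in[\mb{P}_0(f)]^d$ on each interior face $f$, and (ii) $\int_f\pa_n(v-E_hv)\cdot q\md\sigma(x)$ with $q\in[\mb{P}_2(f)]^d$. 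The face moment conditions $\int_fE_hv\cdot q\md\sigma(x)=\int_fv\cdot q\md\sigma(x)$ for $q\in[\mb{P}_0(f)]^d$, the identity~\eqref{eq:moment}, and the interior moment $\int_KE_hv\cdot q\dx=\int_Kv\cdot q\dx$ for $q\in[\mb{P}_1(K)]^d$ are precisely what is needed to kill these contributions; boundary faces contribute nothing because $w$ vanishes on $\pa\Om$ and the corresponding DOFs of $E_hv$ vanish as well.

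For stability~\eqref{eq:stable}, a standard affine pull-back to the reference simplex combined with norm equivalence on a finite-dimensional polynomial space gives
\[
\nm{\na^k E_hv}{L^2(K)}^2\le Ch_K^{d-2k}\sum_{\sigma\in\Sigma_K}\abs{\sigma(E_hv)}^2,\qquad k=0,1,2,
\]
where $\Sigma_K$ denotes the enriching degrees of freedom of $E_hv|_K$. Each nodal-value DOF is controlled via an inverse inequality on $X_h^{\,0}$, and each face or volume moment is controlled by the trace inequality~\eqref{eq:polytrace}; collecting terms and summing over $K$ yields $\wnm{E_hv}\le\alpha\wnm{v}$. For~\eqref{eq:l2enrich}, note that for every DOF whose defining equation already uses the simplex $K$ itself the defect $\sigma(v-E_hv)$ vanishes, while for the remaining DOFs the defect equals the jump of a nodal value of $\na^{\al}v$ across a short chain of neighbouring simplices joining $K$ to the distinguished $K_a$, $K_e$, or face neighbour. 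Scaling gives $\nm{v-E_hv}{L^2(K)}^2\le Ch_K^d\sum\abs{\sigma(v-E_hv)}^2$, and the telescoping argument already used in the proof of Theorem~\ref{thm:reginter}, combined with the trace inequality~\eqref{eq:eletrace}, bounds each nodal jump by $Ch_K^{j-|\al|-d/2}\nm{\na_h^jv}{L^2(\omega_K)}$ over the local element star $\omega_K$, for $|\al|\le j\le 2$; the case $k>0$ follows from the inverse inequality applied to $v-E_hv|_K\in[\mb{P}_9(K)]^d$.

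The most delicate step is item (1): the double integration by parts on the strain-gradient form must land exactly inside the span of the moment conditions imposed in the construction of $E_h$, and in particular the $\mb{P}_2(f)$ condition on $\pa_nE_hv$ in~\eqref{eq:moment}, together with the averaging $\{\!\{\pa_nv\}\!\}$, is calibrated precisely to absorb the two one-sided normal-derivative traces contributed by a pair of adjacent simplices; if any polynomial degree in the construction were off by one, (1) would fail.
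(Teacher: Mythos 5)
Your overall plan for item (1) --- element-wise integration by parts followed by invocation of the moment conditions built into $E_h$ --- is the same as the paper's, but the execution has a genuine gap at the interior-face level. After integration by parts reduces $a_h(v-E_hv,w)$ to $\sum_{K}\int_{\pa K}\pa_n(v-E_hv)\cdot M_{nn}(w)\,\md\sigma(x)$, summation over elements makes each interior face $f$ contribute
\begin{equation*}
\int_f\Bigl(\jump{\pa_n(v-E_hv)}\{\!\{M_{nn}(w)\}\!\}+\{\!\{\pa_n(v-E_hv)\}\!\}\jump{M_{nn}(w)}\Bigr)\,\md\sigma(x).
\end{equation*}
The moment identity~\eqref{eq:moment} controls only the \emph{second} summand: since $E_hv\in C^1$, $\{\!\{\pa_n(v-E_hv)\}\!\}=\{\!\{\pa_nv\}\!\}-\pa_nE_hv$, and this has zero moment against $\jump{M_{nn}(w)}\in\mb{P}_0(f)\subset\mb{P}_2(f)$. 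The first summand equals $\jump{\pa_nv}\{\!\{M_{nn}(w)\}\!\}$ (because $\jump{\pa_nE_hv}=0$), and it is \emph{not} touched by any degree of freedom of $E_h$; your proposal does not account for it. What annihilates it is an independent piece of structure: the built-in constraint~\eqref{eq:spechtconstraint} of the Specht/NZT element forces $\int_f\jump{\pa_nv}\,\md\sigma(x)=0$ on every interior face, and for $w\in W_h$ the average $\{\!\{M_{nn}(w)\}\!\}$ is a \emph{constant} on $f$, so the product integrates to zero. Without this weak continuity of the nonconforming normal derivative the orthogonality fails, no matter how the moments of $E_h$ are tuned; your closing remark that ``if any polynomial degree in the construction were off by one, (1) would fail'' is therefore pointing at the wrong mechanism --- the essential facts are that $M_{nn}(w)\in\mb{P}_0(K)$ (not $\mb{P}_2(f)$ as you write) and the zero-mean normal jump of the Specht/NZT shape functions. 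Your bookkeeping for the elastic part is also off: $\C\eps(w)\cdot n$ is $\mb{P}_1(f)$-valued for $w\in W_h$, so the $\mb{P}_0(f)$ face moment you cite in case (i) does not by itself cover that contribution.

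For item (3) the same zero-mean property is again doing essential work, and it is missing from your sketch. To reach the $j=2$ bound, the jumps $\jump{\pa_nv}$ generated by the edge DOFs~\eqref{eq:edge1} and the face moment~\eqref{eq:moment} must be controlled by $\nm{\na_h^2v}{L^2}$ rather than $\nm{\na_hv}{L^2}$; the paper gains the extra power of $h$ by applying the Poincar\'e inequality to $\jump{\pa_nv}$ on $f$, which is admissible only because $\int_f\jump{\pa_nv}\,\md\sigma(x)=0$. The plain telescoping-plus-trace estimate you describe gives only the $j=1$ rate from those terms. Item (2), by contrast, needs no separate scaling argument; as the paper notes, it is a direct consequence of~\eqref{eq:l2enrich} with $j=k$ and the triangle inequality.
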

The stability estimate~\eqref{eq:stable} is a direct consequence of~\eqref{eq:l2enrich}.
We only give the details for~\eqref{eq:Galerkinorth} and~\eqref{eq:l2enrich}.
\begin{proof}
For any $v\in V_h$ and $w\in W_h$, integration by parts, we obtain
\begin{align*}
&a_h(v-E_hv,w)=\sum_{K\in\mc{T}_h}\int_{\pa K}\pa_n(v-E_hv)M_{nn}(w)\md\sigma(x)\\
&\qquad=\sum_{F\in\mc{F}_h^{\,I}}\int_{F}\lr{[\![\pa_n(v-E_hv)]\!]\{\!\{M_{nn}(w)\}\!\}+\{\!\{\pa_n(v-E_hv)\}\!\}[\![M_{nn}(w)]\!]}\md\sigma(x).
\end{align*}
Using the fact that $M_{nn}(w)=n^{\mathrm{T}}\cdot\lr{\mb{D}\na\eps(w)}\cdot n\in\mb{P}_0(K)$,
\[
\jump{\pa_n(v-E_hv)}=\jump{\pa_nv}\quad\text{and}\quad
\{\!\{\pa_n(v-E_hv)\}\!\}=\{\!\{\pa_nv\}\!\}-\pa_nE_hv,
\]
and~\eqref{eq:moment}, we obtain~\eqref{eq:Galerkinorth}.

As to~\eqref{eq:l2enrich}, we only prove $k=0$, and the general case may be resorting to the inverse inequality because $v$ and $E_h v$ are piecewise polynomials.

For any element $K\in\mc{T}_h$, we let $\mc{N}, \mc{E}, \mc{F}$ and $\mc{V}$ be the set of the nodal variables, edge variables, face variable, and the set of the volume variables of $\mb{P}_9$ conforming element, respectively. For any $v\in V_h$, $v-E_h v\in\mb{P}_9$, and it follows from the scaling argument that
\begin{align*}
\nm{v-E_h v}{L^2(K)}^2&\le C\sum_{N\in\mc{N}(K)}h_K^{3+2\text{order}(N)}\Lr{N(v-E_h v)}^2\\
&\quad+C\sum_{E\in\mc{E}(K)}h_K^{3+2\text{order}(E)}\Lr{E(v-E_h v)}^2\\
&\quad+C\sum_{F\in\mc{F}(K)}h_K^{3+2\text{order}(F)}\Lr{F(v-E_h v)}^2\\
&\quad+C\sum_{V\in\mc{V}(K)}h_K^{3+2\text{order}(V)}\Lr{V(v-E_h v)}^2\\
&={:}I_1+\cdots+I_4,
\end{align*}
where $\mathrm{order}(N)$ is the order of the differentiation in the definition of $N$, and the same rule applies to $\mathrm{order}(E)$, $\mathrm{order}(F)$ and $\mathrm{order}(V)$. It is clear that $I_4=0$ because $V(v)=V(E_hv)$.

Note that we have $N(v-E_h v)=0$ when $\mathrm{order}(N)=0,1$. It remains to estimate the case $\mathrm{order}(N)=2,3,4$. By the standard inverse estimate, we obtain
\begin{align*}
I_1&\leq C\sum_{l=2}^4\sum_{p\in\mc{V}_K}h_{K'}^{3+2l}\sum_{K'\in\mc{T}_h(p)}\nm{\na^lv}{L^{\infty}(K')}^2\\
&\leq C\sum_{l=2}^4\sum_{p\in\mc{V}_K}h_{K'}^{3+2l}\sum_{K'\in\mc{T}_h(p)}h_{K'}^{1-2l}\nm{\na^2v}{L^2(K')}^2\\
&\leq C\sum_{K'\in\mc{T}_K}h_{K'}^4\nm{\na^2_hv}{L^2(K')}^2,
\end{align*}
where $\mc{V}_K$ is the set containing all the vertices of $K$, and $\mc{T}_K=\cup_{p\in\mc{V}_K}\mc{T}_h(p)$ is local element star of $K$.

Next, we denote $I_2^1$ for the term in $I_2$ with $\mathrm{order}(E)=1$, and $I_2^2$ for the term in $I_2$ with $\mathrm{order}(E)=2$. For $\mathrm{order}(E)=1$ and any edge $e\in\mc{E}_h$, by the ~\eqref{eq:edge1}, we have
\[
\abs{E(v-E_hv)}^2\le\abs{\lr{\na v|_K}(a)-\lr{\na v|_{K_e}}(a)}^2.
\]
We select a sequence of elements $\{K_1,\cdots,K_{J_e}\}\subset\mc{T}_h(e)$ such that $K_1=K,K_{J_e}=K_e$, and $f_j=K_j\cap K_{j+1}$ is a common face of $K_j$ and $K_{j+1}$.  We write the right-hand side of the above inequality as the telescopic sum:
\begin{align*}
\labs{(\na v|_K)(a)-(\na v|_{K_e})(a)}^2&=\sum_{j=1}^{J_e-1}\labs{\pa_nv|_{K_j}(a)-\pa_nv|_{K_{j+1}}(a)}^2\\
&\le C\sum_{f\in\mc{F}_h(e)}\abs{f}^{-1}\nm{\jump{\pa_nv}}{L^2(f)}^2.
\end{align*}
Note that  $\int_f\jump{\pa_n v}\dsx=0$, we use the Poincar\'{e} inequality and the trace inequality ~\eqref{eq:polytrace} to obtain
\begin{align*}
I_2^1&\le C\sum_{e\in\mc{E}_K}\sum_{f\in\mc{F}_h(e)}h_K^5\abs{f}^{-1}\nm{\jump{\pa_n v}}{L^2(f)}^2\\
&\le C\sum_{e\in\mc{E}_K}\sum_{f\in\mc{F}_h(e)}h_K^5\nm{\na\jump{\pa_n v}}{L^2(f)}^2\\
&\le C\sum_{K'\in\mc{T}_K}h_K^4\nm{\na^2v}{L^2(K')}^2,
\end{align*}
where $\mc{E}_K$ is the set containing all edges of $K$.

For $b$ and $c$ the equally distributed interior points of an edge $e$, and $p=b,c$, by~\eqref{eq:edge2} and the inverse estimate, we obtain
\[
I_2^2\le C\sum_{e\in\mc{E}_K}\sum_{K'\in\mc{T}_h(e)}h_{K'}^7\nm{\na^2v}{L^{\infty}(K')}^2
\le C\sum_{K'\in\mc{T}_K}h_{K'}^4\nm{\na^2v}{L^2(K')}^2.
\]

Last,  we have $F(v-E_h v)=0$ when $\mathrm{order}(F)=0$. By~\eqref{eq:moment}, and a standard scaling argument gives
\[
I_3\le C\sum_{f\in\mc{F}_K}h_F^5\labs{\negint_f\jump{\pa_n w}\md\sigma(x)}^2\le C\sum_{f\in\mc{F}_K}h_f^5\abs{f}^{-1}\nm{\jump{\pa_n w}}{L^2(f)}^2,
\]
where $\mc{F}_K$ is the set containing all face of $K$. Note that  $\int_f\jump{\pa_nv}\md\sigma(x)=0$ for any face $f\in\mc{F}_h$, using Poincar\'{e} inequality and the trace inequality again, we obtain
\[
I_3\le C\sum_{K'\in\mc{T}_K}h_{K'}^4\nm{\na^2v}{L^2(K')}^2.
\]

Summing up the estimates for $I_1,\cdots,I_4$, we obtain
\begin{align*}
\nm{v-E_h v}{L^2(K)}^2&\le C \sum_{K'\in\mc{T}_K}h_{K'}^4\nm{\na^2 v}{L^2(K')}^2\le C\sum_{K'\in\mc{T}_K}h_{K'}^{2j}\nm{\na^j v}{L^2(K')}^2.
\end{align*}
Summing up all the elements for $K\in\mc{T}_h$, we obtain~\eqref{eq:l2enrich}. 
\end{proof}
\begin{theorem}\label{thm:conv}
Let $u$ and $u_h$ be the solutions of Problem~\eqref{variation:eq} and Problem~\eqref{eq:disvara}, respectively. Then
\begin{equation}\label{eq:h2err}
\wnm{u-u_h}\le(1+\alpha)\Lr{2\inf_{v\in V_h}\wnm{u-v}+\inf_{w\in W_h}\wnm{u-w}}+\beta\,\emph{\osc}(f),
\end{equation}
where the oscillation of $f$ is defined as
\[
\emph{\osc}(f){:}=\Lr{\sum_{K\in\mc{T}_h}h_K^2\inf_{\ov{f}\in\mb{P}_1(K)}\nm{f-\ov{f}}{L^2(K)}^2}^{1/2}.
\]
\end{theorem}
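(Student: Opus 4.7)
The plan is to run a Strang-type argument adapted to the enriching operator $E_h$, exploiting (i) the conformity of $E_h v$ in $H^2_0(\Omega)$, (ii) the Petrov–Galerkin orthogonality \eqref{eq:Galerkinorth} with respect to the auxiliary space $W_h$, and (iii) the $\mb{P}_1$-moment preserving property of $E_h$ built into its volume degrees of freedom.

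First, for arbitrary $v\in V_h$ write $u-u_h=(u-v)+(v-u_h)$ and set $e{:}=v-u_h\in V_h$. By the coercivity estimate \eqref{eq:coer1} one has $\wnm{e}^2\lesssim a_h(e,e)=a_h(v-u,e)+\bigl[a_h(u,e)-(f,e)\bigr]$. The first piece is the approximation term and is controlled by continuity of $a_h$ by $C\wnm{u-v}\wnm{e}$. For the consistency residual I plan to insert $E_h e\in H^2_0(\Om)$ and use the continuous variational equation $a(u,E_he)=(f,E_he)$, which gives
\[
a_h(u,e)-(f,e)=a_h(u,e-E_he)+(f,E_he-e),
\]
so the task reduces to bounding these two terms separately.

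For the first term I will use symmetry of $a_h$ and, for any $w\in W_h$, invoke the Petrov–Galerkin orthogonality \eqref{eq:Galerkinorth}:
\[
a_h(u,e-E_he)=a_h(e-E_he,u-w)+a_h(e-E_he,w)=a_h(e-E_he,u-w),
\]
so that continuity and the stability bound \eqref{eq:stable} yield $|a_h(u,e-E_he)|\le C(1+\alpha)\wnm{u-w}\wnm{e}$. For the second term I exploit the defining identity $\int_K E_hv\,q\dx=\int_K v\,q\dx$ for every $q\in\mb{P}_1(K)$, giving $(f,E_he-e)_K=(f-\bar f,E_he-e)_K$ for any $\bar f\in[\mb{P}_1(K)]^d$. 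Combining with the enrichment estimate \eqref{eq:l2enrich} in the form $\|E_he-e\|_{L^2(K)}\le C\beta h_K\|\nabla_he\|_{L^2(\mc{T}_K)}$ and applying elementwise Cauchy–Schwarz produces
\[
|(f,E_he-e)|\le C\beta\,\osc(f)\,\wnm{e}.
\]

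Putting the three estimates together yields $\wnm{e}\le C\wnm{u-v}+C(1+\alpha)\wnm{u-w}+C\beta\,\osc(f)$, and a final triangle inequality with infimization over $v\in V_h$ and $w\in W_h$ delivers \eqref{eq:h2err}; careful bookkeeping of the coercivity and continuity constants in \eqref{eq:coer1} is needed so that the final pre-factors collapse to exactly $(1+\alpha)$ and $\beta$ as stated. The main obstacle I anticipate is the proper use of \eqref{eq:Galerkinorth} when $e$ only lies in $V_h$ (not in $W_h$): one must really test against $w\in W_h$ on the \emph{right} slot, which is why symmetry of $a_h$ plus the subtraction of an arbitrary $w\in W_h$ from $u$ is essential. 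A secondary technical point is confirming that the enrichment estimate \eqref{eq:l2enrich} with the patch $\mc{T}_K$ reduces, after summation, to a clean global bound compatible with the definition of $\osc(f)$; the exponent book-keeping ($h_K\cdot\|f-\bar f\|$ against the $h_K^2$ in $\osc(f)$) is what dictates using the $j=1$ case of \eqref{eq:l2enrich} rather than $j=2$.
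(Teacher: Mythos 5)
Your proof is correct and takes essentially the same approach as the paper: after renaming, your decomposition $a_h(e,e)=a_h(v-u,e)+a_h(e-E_he,u-w)+(f-\bar f,E_he-e)$ is algebraically identical to the paper's $\wnm{w}^2=a_h(v-z,w-E_hw)+a_h(v-u,E_hw)+(f-\bar f,E_hw-w)$, and both rely on exactly the same ingredients — inserting $E_he\in H_0^2$, the Petrov--Galerkin orthogonality \eqref{eq:Galerkinorth}, the elementwise $\mb{P}_1$-moment preservation, and \eqref{eq:l2enrich} with $j=1,k=0$.
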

\begin{proof}
For any $v\in V_h$, we denote $w=v-u_h$ and $E_h w=(E_h w_1,\cdots,E_h w_d)$. By the Galerkin orthogonality of the enriching operator~\eqref{eq:Galerkinorth}, we obtain, for any $z\in W_h$,
\begin{equation}\label{eq:errcore}
\begin{aligned}
\wnm{w}^2&=a_h(v,w)-a_h(u_h,w)=a_h(v,w-E_hw)+a_h(v,E_h w)-(f,w)\\
&=a_h(v-z,w-E_hw)+a_h(v-u,E_hw)+(f,E_h w-w)\\
&=a_h(v-z,w-E_hw)+a_h(v-u,E_hw)+(f-\ov{f},E_h w-w),
\end{aligned}
\end{equation}
where we have used~\eqref{eq:moment}$_3$ in the last step. The energy estimate~\eqref{eq:h2err} follows from~\eqref{eq:stable} and~\eqref{eq:l2enrich} with $k=0,j=1$ and the triangle inequality and the estimate
\[
\abs{a_h(v-z,w-E_hw)}\le(1+\al)\lr{\wnm{u-v}+\wnm{u-z}}\wnm{w}.
\]
\end{proof}

We are ready to derive the rate of convergence for the Specht triangle and the NZT tetrahedron.
\begin{theorem}
Let $u$ and $u_h$ be the solutions of problem~\eqref{variation:eq} and problem~\eqref{eq:disvara}, if the Hypothesis 2.3 is true, then
\begin{equation}\label{estimate:1}
\wnm{u-u_h}\le Ch^{1/2}\nm{f}{L^2}.
\end{equation}
If $u\in H^3(\Om)$, then
\begin{equation}\label{estimate:2}
\wnm{u-u_h}\le C(h^2+\iota^2 h)\nm{u}{H^3}.
\end{equation}
\end{theorem}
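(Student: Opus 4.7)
The plan is to apply the abstract error estimate of Theorem~\ref{thm:conv} and bound each of the three contributions on its right-hand side separately. I would take $v=I_h^{\,0}u$ in the $V_h$-best approximation, take the Scott-Zhang interpolant of $u$ into the quadratic Lagrangian space $W_h$ in the $W_h$-best approximation, and note that $\osc(f)\le Ch\nm{f}{L^2}$ follows from the trivial per-element choice $\ov f\equiv 0$.

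For the smoother case~\eqref{estimate:2} the argument is direct. Applying~\eqref{interpolation:2} with $m=3$ and $k=1,2$ yields $\nm{u-I_h^{\,0}u}{H^1}\le Ch^2\nm{u}{H^3}$ and $\nm{\na_h^2(u-I_h^{\,0}u)}{L^2}\le Ch\nm{u}{H^3}$, so by the definition of $\wnm{\cdot}$ one obtains $\wnm{u-I_h^{\,0}u}\le C(h^2+\iota h)\nm{u}{H^3}$; the same bound holds for the $W_h$ piece via the standard Scott-Zhang estimates. Combined with $\osc(f)\le Ch^2\nm{f}{L^2}$ (using that $\nm{f}{L^2}$ is controlled by $\nm{u}{H^3}$ through the PDE), inserting everything into~\eqref{eq:h2err} yields the claimed rate.

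For the rough case~\eqref{estimate:1} only $f\in L^2$ is available, so the argument hinges on Lemma~\ref{lemma:reg}: $\nm{u}{H^{3/2}}\le C\nm{f}{L^2}$ uniformly in $\iota$ and $\nm{u}{H^{5/2}}\le C\iota^{-1}\nm{f}{L^2}$. I would first lift the interpolation estimate~\eqref{interpolation:2} to non-integer $m\in[2,3]$ by interpolating between the two integer endpoints in the Sobolev scale, and argue separately (via the $H^1$-stability of the Scott-Zhang step and the jump-seminorm bounds used in the proof of Theorem~\ref{thm:reginter}) for $m=3/2$. This gives $\nm{u-I_h^{\,0}u}{H^1}\le Ch^{1/2}\nm{u}{H^{3/2}}\le Ch^{1/2}\nm{f}{L^2}$ and $\nm{\na_h^2(u-I_h^{\,0}u)}{L^2}\le Ch^{1/2}\nm{u}{H^{5/2}}\le Ch^{1/2}\iota^{-1}\nm{f}{L^2}$. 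The crucial cancellation is that the energy norm multiplies the broken $H^2$ part by $\iota$, so that $\iota\cdot Ch^{1/2}\iota^{-1}\nm{f}{L^2}=Ch^{1/2}\nm{f}{L^2}$ is uniform in $\iota$. The same reasoning bounds the $W_h$-best approximation, and $\osc(f)\le Ch\nm{f}{L^2}$ is subdominant.

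The main obstacle is the fractional interpolation step: Theorem~\ref{thm:reginter} is stated only for integer $m\in\{2,3\}$, so extending it to $m=3/2$ and $m=5/2$ requires treating the composition $I_h^{\,0}=\vpi_h\circ\vpi_C$ with care: standard Scott-Zhang estimates handle $\vpi_C$, and the jump-seminorm estimates from the proof of Theorem~\ref{thm:reginter} together with a trace-inverse argument handle the nodal-correction step $\vpi_h$. Everything else is bookkeeping of how the $\iota$-weights built into $\wnm{\cdot}$ absorb the $\iota^{-1}$ blow-up in the higher-order regularity of Lemma~\ref{lemma:reg}.
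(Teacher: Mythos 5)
Your approach to~\eqref{estimate:1} is essentially the paper's: insert $v=I_h^{\,0}u$ and the Scott--Zhang interpolant into the abstract bound~\eqref{eq:h2err}, feed in the fractional regularity estimates~\eqref{eq:estfrac1}, \eqref{eq:regfrac2}, and observe that the $\iota$-weight in $\wnm{\cdot}$ cancels the $\iota^{-1}$ in $\nm{u}{H^{5/2}}$. You are, in fact, more careful than the paper on one point: Theorem~\ref{thm:reginter} states~\eqref{interpolation:2} only for $2\le m\le 3$, so using it with $m=3/2$ (to bound $\nm{\na(u-I_h^{\,0}u)}{L^2}$ by $Ch^{1/2}\nm{u}{H^{3/2}}$) genuinely requires an extension of the interpolation estimate below $m=2$. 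You flag this and sketch the right fix (fractional Scott--Zhang estimate for $\vpi_C$ plus the jump-seminorm estimate for the nodal correction $\vpi_h$); the paper simply cites~\eqref{interpolation:2} and~\eqref{eq:estfrac1} without commenting on the range. The $m=5/2$ application is within range, so only the $H^1$-component of $\wnm{\cdot}$ needs this extension.

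Two issues on the smooth-solution estimate~\eqref{estimate:2}. First, the oscillation bound $\osc(f)\le Ch^2\nm{f}{L^2}$ you invoke is not correct: with only $f\in L^2$ the trivial choice $\ov f=0$ gives $\osc(f)\le Ch\nm{f}{L^2}$ and no more; obtaining $h^2$ would require $f\in H^1$ plus Bramble--Hilbert, and the claim that $\nm{f}{L^2}\le C\nm{u}{H^3}$ is not $\iota$-uniform (the PDE relates $f$ to fourth derivatives of $u$ weighted by $\iota^2$). This error is harmless for the $h^{1/2}$ estimate, since there $\osc(f)\le Ch\nm{f}{L^2}$ is subdominant as you say. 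Second, and more substantively, your computation delivers $\wnm{u-u_h}\le C(h^2+\iota h)\nm{u}{H^3}$, which is not the claimed $C(h^2+\iota^2 h)\nm{u}{H^3}$; indeed, inserting $u\in H^3$ into the enriching-operator bound~\eqref{eq:h2err} can give at best $\iota h$ from the $\iota\nm{\na_h^2(u-I_h^{\,0}u)}{L^2}$ term. The paper sidesteps this by declaring that~\eqref{estimate:2} ``may be proved in a standard manner'' and giving no details; the standard manner alluded to is presumably a Strang-lemma argument in which the consistency term carries the explicit $\iota^2$ weight, but even there the best-approximation contribution yields $\iota h$, so the claimed exponent on $\iota$ is not recovered by your route (nor by any argument visible in the paper). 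You should be aware that your derivation only establishes the weaker $C(h^2+\iota h)$ bound.
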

\begin{proof}
By~\eqref{interpolation:2}, and combining the regularity estimates~\eqref{eq:estfrac1} and ~\eqref{eq:regfrac2}, we have
\[
\inf_{v\in V_h}\wnm{u-v}\le\iota\nm{\na^2_h(u-I_h^{\,0}u)}{L^2}+\nm{\na(u-I_h^{\,0}u)}{L^2}\le Ch^{1/2}\nm{f}{L^2}.
\]

By the interpolation error estimate of {\sc Scott-Zhang} interpolant~\cite{ScottZhang:1990}, we obtain
\[
\inf_{v\in W_h}\wnm{u-v}\le\iota\nm{\na^2_h(u-\vpi_Cu)}{L^2}+\nm{\na(u-\vpi_Cu)}{L^2}\le Ch^{1/2}\nm{f}{L^2}.
\]
It is clear that
\[
\osc(f)\le Ch\nm{f}{L^2}.
\]
Combining all the above inequalities and~\eqref{eq:h2err}, we obtain~\eqref{estimate:1}.

The estimate~\eqref{estimate:2} may be proved in a standard manner, we omit the details.
\end{proof}
\section{Numerical Experiments}
In this part, we test the accuracy of the Specht triangle and the NZT tetrahedorn and the numerical pollution effect for a solution with strong boundary layer. In all the examples, we let $\Om=(0,1)^d$ and set $\lam=10,\mu=1$. For $d=2$, the initial unstructured mesh consists of $220$ triangles and $127$ vertices, and the maximum mesh size is $h=1/8$; See Figure~\ref{mesh}$_ a$. For $d=3$, we construct initial mesh by splitting origin cube into $512$ small cubes, and each small cube is divided into 6 tetrahedrons; See Figure~\ref{mesh}$_b$.
\begin{figure}[htbp]
\centering
\subfigure[]{\includegraphics[width=6cm, height=4.5cm]{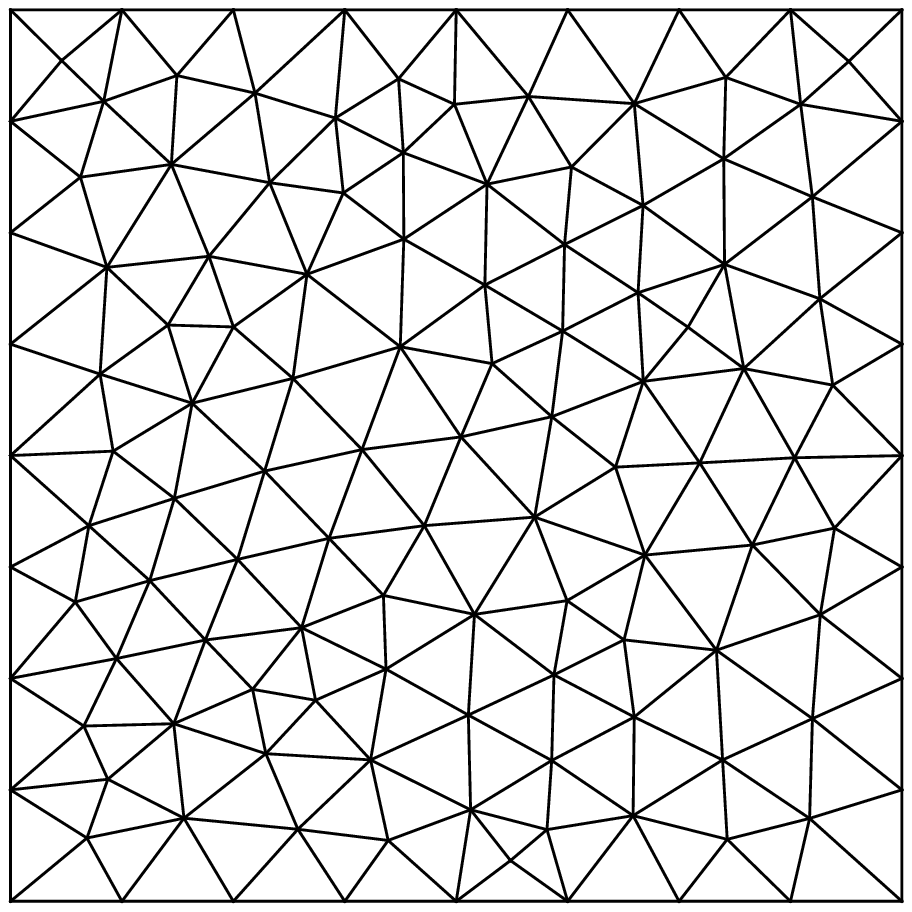}}
\subfigure[]{\includegraphics[width=6cm, height=4.5cm]{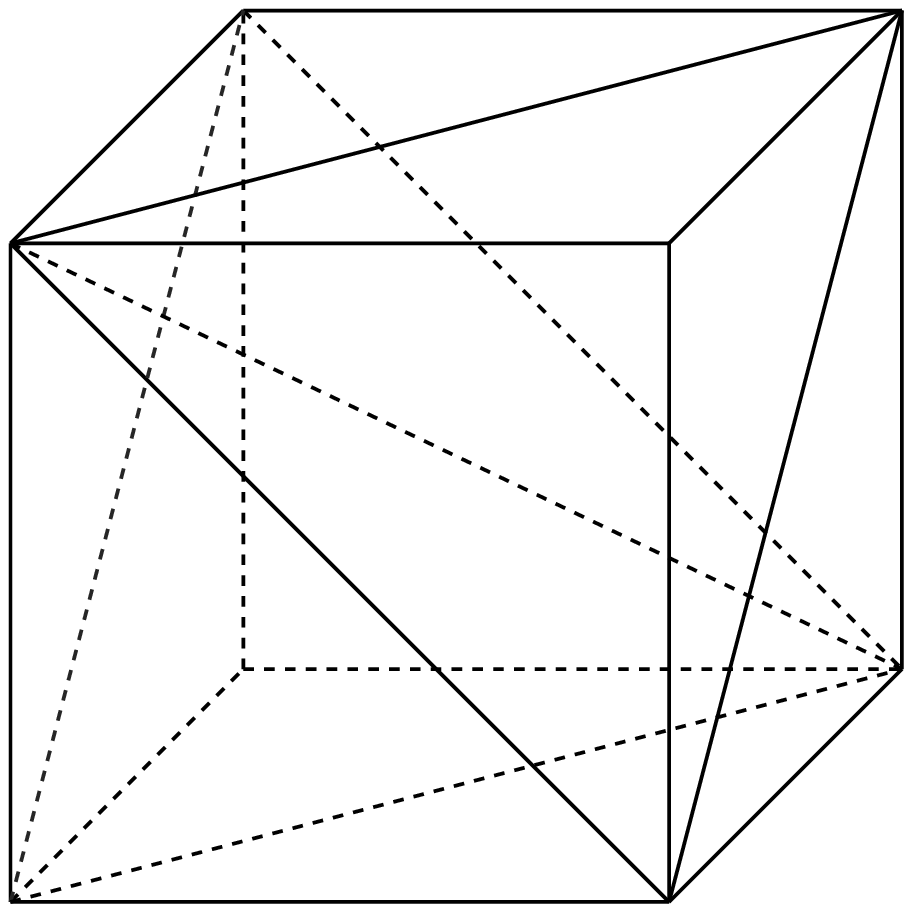}}
\caption{Plots of meshes: (a) $d=2$; (b) $d=3$.}\label{mesh}
\end{figure}

Throughout the simulation, we employ the analytic basis functions for the Specht triangle~\cite{Specht:1988, ZienkiewiczTaylor:2009} and the NZT tetrahedron~\cite{WangShiXu:2007}. The computation for the NZT tetrahedron is performed in a parallel hierarchical grid platform (PHG)~\cite{zlb:2009}.\footnote{http://lsec.cc.ac.cn/phg}
\subsection{Example for smooth solution}
This example is to test the accuracy of the elements. We let $u=(u_1,u_2,u_3)$ with
\begin{align*}
u_1&=\prod_{i=1}^d\lr{\exp(\cos2\pi x_i)-\exp(1)},u_2=\prod_{i=1}^d(\cos2\pi x_i-1),\\
u_3&=\prod_{i=1}^dx_i^2(x_i-1)^2.
\end{align*}
The source term $f$ is computed by~\eqref{eq:sgbvp}$_1$. For $d=2$, we drop the third component $u_3$.
This solution is smooth, and we measure the rates of convergence in the relative energy norm $\wnm{u-u_h}/\wnm{u}$ for different $\iota$.  We report the rates of convergence for the Specht triangle and the NZT tetrahedorn in Table~\ref{tab:1} and Table~\ref{tab:2}, respectively. We observe that the rates of convergence appear to be linear when $\iota$ is large, while it turns out to be quadratic when $\iota$ is close to zero, which is consistent with the theoretical predications~\eqref{estimate:2}.
\begin{table}[htbp]
\caption{Rate of convergence of the Specht triangle.}\label{tab:1}
\begin{tabular}{lllllll}
\hline\noalign{\smallskip}
$\iota\backslash h$ &$1/8$ &$1/16$ &$1/32$ &$1/64$ &$1/128$ &$1/256$\\
\hline\noalign{\smallskip}
1e+0 &1.99e-01&9.87e-02&4.80e-02&2.36e-02&1.17e-02&5.85e-03\\
\noalign{\smallskip}
\text{rate}&&1.01&1.04&1.02&1.01&1.00\\
\noalign{\smallskip}
1e-2 &3.16e-02&1.21e-02&5.30e-03&2.53e-03&1.25e-03&6.21e-04\\
\noalign{\smallskip}
\text{rate}&&1.39&1.19&1.07&1.02&1.01\\
\noalign{\smallskip}
1e-4 &2.20e-02&5.57e-03&1.39e-03&3.48e-04&8.75e-05&2.26e-05\\
\noalign{\smallskip}
\text{rate}&&1.98&2.00&2.00&1.99&1.95\\
\noalign{\smallskip}
1e-6 &2.20e-02&5.57e-03&1.39e-03&3.47e-04&8.65e-05&2.16e-05\\
\noalign{\smallskip}
\text{rate}&&1.98&2.00&2.00&2.00&2.00\\
\noalign{\smallskip}
\hline
\end{tabular}
\end{table}
\begin{table}[htbp]
\caption{Rates of convergence of the NZT tetrahedorn.}\label{tab:2}
\begin{tabular}{llllll}
\hline\noalign{\smallskip}
$\iota\backslash h$ &$1/8$ &$1/16$ &$1/32$ &$1/64$ &$1/128$\\
\hline\noalign{\smallskip}
1e+0 & 1.02e-01& 7.54e-01& 5.05e-01& 2.84e-01& 1.47e-01\\
\noalign{\smallskip}
\text{rate}&&0.43 &0.58 &0.83 &0.95\\
\noalign{\smallskip}
1e-2 & 5.12e-01 &2.28e-01& 8.99e-02& 3.57e-02& 1.55e-02\\
\noalign{\smallskip}
\text{rate}&&1.17 &1.34 &1.33 &1.20\\
\noalign{\smallskip}
1e-4 &3.03e-01 &7.14e-02 &1.79e-02 &4.66e-03 &1.27e-03\\
\noalign{\smallskip}
\text{rate}&&2.08 &1.99 &1.94 &1.89\\
\noalign{\smallskip}
1e-6 &3.01e-01 &6.98e-02 &1.71e-02 &4.26e-03 &1.07e-03\\
\noalign{\smallskip}
\text{rate}&&2.11 &2.03 &2.00 &1.99\\
\hline
\end{tabular}
\end{table}
\subsection{Example with boundary layer}
In this example, we test the performance of both elements to resolve a solution with strong boundary layer, such boundary layer is one of the difficulty for the strain gradient elasticity model, and we refer to~\cite{Engel:2002} for a one-dimensional example with analytical expression, the following construction is based on that example. We construct a displacement field $u=(u_1,u_2,u_3)$ with a layer as
\begin{align*}
u_1&=\prod_{i=1}^d(\exp(\sin\pi x_i)-1-\varphi(x_i)),\quad u_2=\prod_{i=1}^d(\sin\pi x_i-\varphi(x_i)),\\
u_3&=\prod_{i=1}^d\lr{-\pi x_i(x_i-1)-\varphi(x_i)}
\end{align*}
with
\[
\varphi(x)=\pi\iota\dfrac{\cosh[1/2\iota]-\cosh[(2x-1)/2\iota]}{\sinh[1/2\iota]}.
\]
A direct calculation gives
\[
\lim_{\iota\rightarrow0}u=u_0=\Lr{\prod_{i=1}^d\exp(\sin\pi x_i)-1,\prod_{i=1}^d\sin\pi x_i,\prod_{i=1}^d\pi x_i(1-x_i)},
\]
with $u_0|_{\pa\Om}=0$ and $\pa_nu_0|_{\pa\Om}\neq0$. It is clear that $\pa_n u$ has boundary layers.
The source term $f$ is also computed from~\eqref{eq:sgbvp}$_1$. We report the rates of convergence for the elements in the relative energy norm $\wnm{u-u_h}/\wnm{u}$ with $\iota=10^{-6}$ in Table~\ref{tab:3a}. The half order rates of convergence are observed for both elements, which are consistent with the theoretical predictions~\eqref{estimate:2}.
\begin{table}[htbp]
\caption{Rates of convergence for $\iota=10^{-6}$.}\label{tab:3a}
\begin{tabular}{lllllll}
\hline\noalign{\smallskip}
$h$ &$1/8$ &$1/16$ &$1/32$ &$1/64$ &$1/128$ &$1/256$ \\
\hline\noalign{\smallskip}

NZT &2.74e-01 &1.73e-02 &1.18e-01 &8.24e-02 &5.80e-02 &4.10e-2\\
\noalign{\smallskip}
\text{rate}&&0.66 &0.56 &0.52 &0.51 &0.50\\
\noalign{\smallskip}

Specht&1.57e-01&1.10e-01&7.70e-02&5.42e-02&3.82e-02&2.70e-02\\
\noalign{\smallskip}
\text{rate}&&0.51&0.51&0.51&0.50&0.50\\
\noalign{\smallskip}
\hline
\end{tabular}
\end{table}
\section{Conclusion}
We prove a new H$^2-$Korn's inequality and a new broken H$^2-$Korn's inequality. The former is crucial for the well-posedness of a strain gradient elasticity model, while the latter motivates us to construct robust nonconforming elements for this model, and the elements are simpler than the known elements in the literature; See, e.g.,~\cite{LiMingShi:2017}. With the aid of the new regularized interpolant and the enriching operator, we proved that the tensor product of the Specht triangle and the NZT tetrahedron converges uniformly with respect to the small materials parameter under the minimal smoothness assumption on the solution. Moreover, the technicalities may also be used to derive shaper error bounds for the elements in~\cite{Tai:2001, Tai:2006, GuzmanLeykekhmanNeilan:2012}. Guided by the broken H$^2-$Korn's inequality, we can design robust elements for the nonlinear strain gradient elastic models, thin beam and thin plate with strain gradient effect in~\cite{Fisher:2010, Engel:2002} by combining the tricks in~\cite{BraessMing:2005, MingShi:2006} and the machinery developed in this article, which will be left for further pursuit.
\bibliographystyle{amsplain}
\bibliography{sg}

\providecommand{\bysame}{\leavevmode\hbox to3em{\hrulefill}\thinspace}
\providecommand{\MR}{\relax\ifhmode\unskip\space\fi MR }
\providecommand{\MRhref}[2]{%
  \href{http://www.ams.org/mathscinet-getitem?mr=#1}{#2}
}
\providecommand{\href}[2]{#2}
\begin{thebibliography}{10}

\bibitem{AbelMoraMuller:2011}
H.~Abel, M.~Giovanna~Mora, and S.~M\"uller, \emph{Large time existence for thin
  vibrating plates}, Comm. PDEs \textbf{36} (2011), 2062--2102.

\bibitem{AdamsFournier:2003}
R.A. Adams and J.J.F. Fournier, \emph{Sobolev {S}paces}, Academic Press, 2nd
  ed., 2003.

\bibitem{Agmon:1964}
S.~Agmon, L.~Douglis, and A.~Nirenberg, \emph{Estimates near the boundary for
  solutions of elliptic partial differential equations satisfying general
  boundary conditions {II}}, Comm. Pure Appl. Math. \textbf{17} (1964), 35--92.

\bibitem{Altan:1992}
S.B. Altan and E.C. Aifantis, \emph{On the structure of the mode
  \uppercase\expandafter{\romannumeral3} crack-tip in gradient elasticity},
  Scripta Metal. Mater. \textbf{26} (1992), 319--324.

\bibitem{ArgyrisFriedScharpf:1968}
J.H. Argyris, I.~Fried, and D.W. Scharpf, \emph{The {Tuba} family of plate
  elements for the matrix displacement method}, The Aero. J. Roy. Aero. Soc.
  \textbf{72} (1968), 701--709.

\bibitem{Aifantis:2011}
H.~Askes and E.C. Aifantis, \emph{Gradient elasticity in statics and dynamics:
  An overview of formulations, length scale identification procedures, finite
  element implementations and new results}, Inter. J. Solids and Struc.
  \textbf{48} (2011), 1962--1990.

\bibitem{BlumRannacher:1980}
H.~Blum and R.~Rannacher, \emph{On the boundary value problem of the biharmonic
  operator on domains with angular corners}, Math. Methods Appl. Sci.
  \textbf{2} (1980), 556--581.

\bibitem{BraessMing:2005}
D.~Braess and P.B. Ming, \emph{A finite element method for nearly
  incompressible elasticity problem}, Math. Comp. \textbf{74} (2005), 25--52.

\bibitem{Brenner:1994}
S.-C. Brenner, \emph{Two level additive {S}chwartz preconditioners for
  nonconforming finite elements}, In {Domain Decomposition Methods in
  Scientific and Engineering Computing}, Edited by {D.E. Keys and J. Xu.},
  {Amer. Math. Soc., Providence, Comtemporary Mathematics, Vol. 180}, 1994,
  pp.~9--14.

\bibitem{Brenner:1996}
\bysame, \emph{Two level additive {S}chwartz preconditioners for nonconforming
  finite elements}, Math. Comp. \textbf{65} (1996), 897--921.

\bibitem{Brenner:2004}
S.C. Brenner, \emph{Korn's inequalities for piecewise {H}$^1$ vector fields},
  Math. Comp. \textbf{73} (2004), 1067--1087.

\bibitem{Brenner:2011}
S.C. Brenner and M.~Neilan, \emph{A {C}$^{\,0}$ interior penalty method for a
  fourth order elliptic singular perturbation problem}, SIAM J. Numer. Anal.
  \textbf{49} (2011), 869--892.

\bibitem{BrennerScott:2008}
S.C. Brenner and L.R. Scott, \emph{The {M}athematical {T}heory of {F}inite
  {E}lement {M}ethods}, Springer Science $+$ Buiness Media LLC. 3rd eds., 2008.

\bibitem{Ciarlet:1978}
P.G. Ciarlet, \emph{The {F}inite {E}lement {M}ethod for {E}lliptic {P}roblems},
  North-Holland, Amsterdam, 1978.

\bibitem{Ciarlet:2013}
\bysame, \emph{{Linear and Nonlinear Functional Analysis with Applications}},
  {SIAM}, 2013.

\bibitem{CiarletRaviart:1972}
P.G. Ciarlet and P.-A. Raviart, \emph{General {L}agrange and {H}ermite
  interpolation in {R}$^n$ with applications to finite element methods}, Arch.
  Rational Mech. Anal. \textbf{46} (1972), 177--199.

\bibitem{Engel:2002}
G.~Engel, K.~Garikipati, T.J.R. Hughes, M.G. Larsson, L.~Mazzei, and R.L.
  Taylor, \emph{Continuous/discontinuous finite element approximations of
  fourth-order elliptic problems in structural and continuum mechanics with
  applications to thin beams and plates, and strain gradient elasticity},
  Comput. Methods Appl. Mech. Engrg. \textbf{191} (2002), 3669--3750.

\bibitem{Eringen:2002}
A.C. Eringen, \emph{{Nonlinear Continuum Field Theories}}, Springer-Verlag, New
  York, 2002.

\bibitem{Exadaktylos:1996}
G.E. Exadaktylos and E.C. Aifantis, \emph{Two and three dimensional crack
  problems in gradient elasticity}, J. Mech. Behav. Mater. \textbf{7} (1996),
  93--118.

\bibitem{Fisher:2010}
P.~Fisher, J.~Mergheim, and P.~Steinmann, \emph{On the {C}$^1$ continuous
  discretization of non-linear gradient elasticity: a comparison of {NEM} and
  {FEM} based on {B}ernstein-{B}\'ezier patches}, Int. J. Numer. Meth. Engrg.
  \textbf{82} (2010), 1282--1307.

\bibitem{FleckHutchinson:1997}
N.A. Fleck and J.W. Hutchinson, \emph{Strain gradient plasticity}, Advances in
  {A}pplied {M}echanics, Vol. 33, {Academic Press}, 1997, pp.~295--361.

\bibitem{Gallistl:2015}
D.~Gallistl, \emph{Morley finite element method for the eigenvalue of the
  biharmonic operator}, IMA J. Numer. Anal. \textbf{35} (2015), 1779--1811.

\bibitem{Grisvard:1985}
P.~Grisvard, \emph{{Elliptic Problems in Nonsmooth Domains}}, Pitman, Boston,
  1985.

\bibitem{GuzmanLeykekhmanNeilan:2012}
J.~Guzm\'an, D.~Leykekhman, and M.~Neilan, \emph{A family of non-conforming
  elements and the analysis of {N}itsche's method for a singularly perturbed
  fourth order problem}, Calcolo \textbf{49} (2012), 95--125.

\bibitem{Korn:1908}
A.~Korn, \emph{Solution g\'en\'erale du probl\`eme d'\'equilibre dans la
  th\'eorie de l'\'elasticit\'e dans le cas o\`u les efforts sont donn\'es \`a
  la surface}, Ann. Fac. Sci. Toulouse Sci. Math. Sci. Phys. (2) \textbf{10}
  (1908), 165--269.

\bibitem{Korn:1909}
\bysame, \emph{\"{U}ber einige {u}ngleichungen, welche in der {T}heorie der
  elastischen und elektrischen {S}chwingungen eine {R}olle spielen}, Bull.
  Intern. Cracov. Akad. Umiejetnosci (Classe Sci. Math. Nat.) (1909), 706--724.

\bibitem{LiMingShi:2017}
H.L. Li, P.B. Ming, and Z.-C. Shi, \emph{Two robust nonconforming
  {H$^2-$}elements for linear strain gradient elasticity}, Numer. Math.
  \textbf{137} (2017), 691--711.

\bibitem{LiMingShi:2020}
\bysame, \emph{{The quadratic Specht triangle}}, J. Comput. Math. \textbf{38}
  (2020), 103--124.

\bibitem{LiaoMing:2019}
Y.L. Liao and P.B. Ming, \emph{A family of nonconforming rectangular elements
  for strain gradient elasticity}, Adv. Appl. Math. Mech. \textbf{11} (2019),
  1263--1286.

\bibitem{MardalWinther:2006}
K.A. Mardal and R.~Winther, \emph{An observation on {K}orn's inequality for
  nonconforming finite element methods}, Math. Comp. \textbf{75} (2006), 1--6.

\bibitem{Mazya:2010}
V.~Ma\'{z}ya and J.~Rossmann, \emph{{Elliptic Equations in Polyhedral
  Domains}}, {American Mathematical Society, Providence, RI}, 2010.

\bibitem{Mindlin:1964}
R.D. Mindlin, \emph{Microstructure in linear elasticity}, Arch. Rational Mech.
  Anal \textbf{10} (1964), 51--78.

\bibitem{MingShi:2006}
P.B. Ming and Z.-C. Shi, \emph{Analysis of some low order quadrilateral
  {Reissner-Mindlin} plate elements}, Math. Comp. \textbf{75} (2006),
  1043--1065.

\bibitem{Neilan:2019}
M.~Neilan and M.H. Wu, \emph{{Discrete Miranda-Talenti estimates and
  applications to linear and nonlinear PDEs}}, J. Comput. Appl. Math.
  \textbf{356} (2019), 358--376.

\bibitem{Tai:2001}
T.K. Nilssen, X.C. Tai, and R.~Winther, \emph{A robust nonconforming
  {H}$^2$-element}, Math. Comp. \textbf{70} (2001), 489--505.

\bibitem{Zerovs:20092}
S.-A. Papanastasiou, A.~Zervos, and I.~Vardoulakis, \emph{A three-dimensional
  {C}$^{\,1}$ finite element for gradient elasticity}, Int. J. Numer. Meth.
  Engrg. \textbf{135} (2009), 1396--1415.

\bibitem{Ru:1993}
C.Q. Ru and E.C. Aifantis, \emph{A simple approach to solve boundary-value
  problems in gradient elasticity}, Acta Mech. \textbf{101} (1993), 59--68.

\bibitem{ScottZhang:1990}
L.R. Scott and S.Y. Zhang, \emph{Finite element interpolation of nonsmooth
  functions satisfying boundary conditions}, Math. Comp. \textbf{54} (1990),
  483--493.

\bibitem{Semper:1992}
B.~Semper, \emph{Conforming finite element approximations for a fourth-order
  singular perturbation problem}, SIAM J. Numer. Anal. \textbf{29} (1992),
  1043--1058.

\bibitem{Semper:1994}
\bysame, \emph{Locking in finite-element approximations to long thin extensible
  beams}, IMA J. Numer. Anal. \textbf{14} (1994), 97--109.

\bibitem{Specht:1988}
B.~Specht, \emph{Modified shape functions for the three node plate bending
  element passing the patch test}, Int. J. Numer. Meth. Engrg. \textbf{28}
  (1988), 705--715.

\bibitem{Tai:2006}
X.C. Tai and R.~Winther, \emph{A discrete de {R}ham complex with enhanced
  smoothness}, Calcolo \textbf{43} (2006), 287--306.

\bibitem{Veeser:2019}
A.~Veeser and P.~Zanotti, \emph{Quasi-optimal nonconforming methods for
  symmetric elliptic problems. {II}-overconsistency and classical nonconforming
  elements}, SIAM J. Numer. Anal. \textbf{57} (2019), 266--292.

\bibitem{Wang:2020}
H.Y. Wang, \emph{Nonconforming tetrahedron elements for strain gradient
  elasticity model}, 2020, Ph. D. thesis, Chinese Academy of Sciences.

\bibitem{WangShiXu:2007}
M.~Wang, Z.-C. Shi, and J.C. Xu, \emph{A new class of {Z}ienkiewicz-type
  non-conforming element in any dimensions}, Numer. Math. \textbf{106} (2007),
  335--347.

\bibitem{zlb:2009}
L.B. Zhang, \emph{A parallel algorithm for adaptive local refinement of
  tetrahedral meshes using bisection}, Numer. Math. Theor. Meth. Appl.
  \textbf{2} (2009), 65--89.

\bibitem{Zhang:2009}
S.Y. Zhang, \emph{A family of 3d continuously differentiable finite elements on
  tetrahedral grids}, Appl. Numer. Math. \textbf{59} (2009), 219--233.

\bibitem{ZienkiewiczTaylor:2009}
O.C. Zienkiewicz and R.L. Taylor, \emph{The {F}inite {E}lement {M}ethod for
  {S}olid and {S}tructural {M}echanics, 6th ed.}, Elsevier (Singapore), Pte
  Ltd., 2009.

\end{thebibliography}
\end{document}